\renewcommand{\Re}{\operatorname{Re}}
\renewcommand{\Im}{\operatorname{Im}}
\newcommand{\level}[1]{d_\tau(#1)}
\newcommand{\norm}{\Vert}
\DeclareMathOperator{\Hol}{Hol}
\newcommand{\conj}[1]{\overline{#1}}
\newcommand{\inner}[2]{\langle #1, #2 \rangle}
\newcommand{\normD}[1]{\Vert #1 \Vert_{\mathcal{D}}}
\newcommand{\normH}[1]{\Vert #1 \Vert_{\text{H}^2}}
\newcommand{\CC}{\mathbb{C}}
\newcommand{\eit}[1]{e^{i#1}}
\DeclareMathOperator{\capacity}{Cap}
\DeclareMathOperator{\supp}{supp}
\newcommand{\DD}{\mathbb{D}}
\newcommand{\TT}{\mathbb{T}}
\newcommand{\cX}{\mathcal{X}}
\newcommand{\cH}{\mathcal{H}}
\newcommand{\cZ}{\mathcal{Z}}
\newcommand{\NN}{\mathbb{N}}
\newcommand{\cM}{\mathcal{M}}
\newcommand{\cD}{\mathcal{D}}
\newcommand{\cS}{\mathcal{S}}
\newtheorem{theorem}{Theorem}
\newtheorem{thm}{Theorem}
\newtheorem{lem}[thm]{Lemma}
\newtheorem{prop}[thm]{Proposition}
\newtheorem{colo}[thm]{Corollary}
\newtheorem{defin}{Definition}
\newtheorem*{prop*}{Proposition}
\title{Onto interpolation for the Dirichlet space and for  $ H_1(\DD) $}
\author{Nikolaos Chalmoukis} 
\date{}
\thanks{ The author is supported by the fellowship INDAM-DP-COFUND-2015  ``INdAM Doctoral Programme
	in Mathematics and/or Applications
	Cofunded by Marie Sklodowska-Curie Actions", Marie Sklodowska-Curie Actions \# 713485}
\subjclass[2010]{Primary: 30E05. Secondary: 30H25, 30J99}
\address{N. Chalmoukis \\ Dipartimento di Matematica \\ Universit\'a di Bologna \\ Piazza di Porta S. Donato, 5 \\ 40126 Bologna, Italy}
\email{nikolaos.chalmoukis2@unibo.it}
\begin{document}

\begin{abstract} We give a characterization  of onto interpolating sequences with finite associated measure for the Dirichlet space in terms of condenser capacity. In the Sobolev space $H_1(\DD)$ we define a natural notion of onto interpolation and we prove that the same condenser capacity condition characterizes all onto interpolating sequences.  As a result, for sequences with finite associated measure, the problem of interpolation by an analytic function reduces to a problem of interpolation by a function in $H_1(\DD)$.

\end{abstract}

\maketitle


\section{Introduction}

Interpolation problems by analytic functions is a subject of more than a century old that is rich of interesting and deep results but also continues to stimulate new research. Interpolation has come a long way since the fundamental papers of Pick \cite{Pick1915} and Nevanlinna \cite{Nevanlinna1919}, but the spirit of the problems is invariant. One is given a subset of analytic functions in the unit disc  $\varepsilon  \subseteq \Hol(\DD)$, a sequence (finite of infinite) $\{z_i\}\subseteq \DD$ of {\it interpolating nodes} and a {\it target space} $\cX$, i.e. a set of sequences to be interpolated. The problem is then to determine whether or not for any data $\{ w_i \} \in \cX$ there exists a function $f\in \varepsilon$ such that 
\[ f(z_i)=w_i, \,\,\, \forall i. \]
Before entering into the particulars of the interpolation problem that we will consider let us mention that such problems are ubiquitous in analysis, with applications for example to the theory of Banach algebras (Corona theorem for $H^\infty$ \cite{Carleson1962}, analytic discs in the maximal ideal subspace  \cite{Schark1961} etc),  but also in applications (if one is interested in the latter aspect we recommend the survey paper \cite{McCarthy2003}).

Let us now take a brief look at the interpolation theory for the Hardy space $H^2$ and $H^\infty$, the algebra of bounded analytic functions which as we shall see are closely related. 

\subsection{The Hardy space case}

Consider the Hardy space $H^2$, the Hilbert space of analytic functions in the unit disc equipped with the norm 
 \[ \norm f \norm ^2_{H^2} := \sup_{0<r<1} \frac{1}{2\pi} \int_{0}^{2\pi} |f(re^{i\theta} )|^2 d\theta.\] 
 
 The Szeg\"o kernel,
 \[ \cS_w(z):=\frac{1}{1-z\conj{w}}, \]
 plays the role of the {\it reproducing kernel} for the Hardy space, meaning that for any $f\in H^2$
 \[ f(w) = \inner{f}{\cS_w}_{H^2}, \,\,\,  \forall w\in \DD. \]
The (commutative) Banach algebra $ H^\infty $ is in fact closely related to $H^2$ since it is isometric and weak* homeomorphic to the algebra of multiplication operators on $H^2$ \cite[Theorem 3.24]{AglerMcCarthy00}. By a multiplication operator we mean a bounded linear operator $M_g$ where $g$ is the symbol of the operator, defined by the relation 
\[ M_gf:=gf, \,\,\, \forall f \in H^2. \]

Going back to interpolation, for $H^\infty$ the interpolation problem seems quite natural. Given a sequence of nodes $\cZ=\{z_i\}$  is it always possible to solve the interpolation problem for bounded data $\{w_i\}\in \ell^\infty$ by a function in $H^\infty$ ? This problem has already been considered by Carleson \cite{Carleson1958}.

About the corresponding interpolation problem for $H^2$, the target space seems less obvious.  Following Shapiro and Shields \cite{Shapiro61} we define the target space to be sequences $\{ a_i \}$ of complex numbers such that 
\[ \sum_{i=1}^\infty|a_i|^2(1-|z_i|^2) < +\infty. \]
Therefore Shapiro and Shields ask for which sequences the weighted restriction operator 
\[ T(f)= \{f(z_i)(1-|z_i|^2)^{\frac{1}{2}} \} \]
satisfies 
\begin{align}
&T(H^2) = \ell^2 \tag{$UI_{H^2}$} \label{UIH2}\\ 
&T(H^2) \supseteq \ell^2.  \tag{$OI_{H^2}$} \label{OIH2}
\end{align}
If $\eqref{UIH2}$ happens we say that the sequence is {\it universally interpolating}, and if \eqref{OIH2} happens we call it {\it onto interpolating}. Notice that a difficulty that arises is already reflected in the terminology we have choosen. {\it A priori} it is not clear if one should ask for the  restriction operator to be bounded or just that it is possible to solve the interpolation problem for any given data in the target space. Fortunately, for $H^2$ this ceases to be a concern, since both notions turn out to be equivalent. This is part of the results of Carleson \cite{Carleson1958} and Shapiro \& Shields \cite{Shapiro61} summarized in the following theorem.
\begin{thm}[Carleson \cite{Carleson1958}, Shapiro \& Shields \cite{Shapiro61}]
Let $\cZ:=\{z_i\}$ a sequence in the unit disc. Then, the following are equivalent. 
\begin{enumerate}
    \item $\cZ$ is interpolating for $H^\infty$,
    \item $\cZ$ is universally interpolating for $H^2$,
    \item $\cZ$ is onto interpolating for $H^2$,
    \item $\cZ$ is separated in the hyperbolic metric and 
    \[ \sum_{i=1}^\infty |f(z_i)|^2(1-|z_i|^2) \leq C_\cZ \norm f \norm_{H^2}^2, \,\,\, \forall f \in H^2, \tag{$CM_{H^2}$} \label{CMH2} \] for some positive $C_\cZ$ depending only on $\cZ.$
\end{enumerate}
\end{thm}
From the above conditions, probably the most enigmatic one is \eqref{CMH2}. Another way to formulate it is by saying that the atomic measure 
\[ \mu_\cZ := \sum_{z\in\cZ} \delta_z (1-|z_i|^2) \]
is a {\it Carleson measure} for $H^2$, i.e.
\[  H^2 \subseteq L^2(\mu_\cZ, \DD).\]
Such measures have a neat characterization due to Carleson. Let $z\in \DD$ we denote by $S(z)$ the region enclosed by the unit circle $\TT$ and the hyperbolic geodesic which passes from $z$ and is perpendicular to the radius which is defined by $z$. Then a measure $\mu$ is Carleson if and only if there exists $C(\mu)>0$ such that 
\[ \mu(S(z)) \leq C(\mu) (1-|z|), \,\,\, \forall z\in \DD. \]
This completes the picture of interpolation for  $H^2$ and $H^\infty.$
At this point it is important to mention that Shapiro and Shields consider similar weighted interpolation problems for the whole range of Hardy spaces $H^p, 1<p<\infty$, but for the purposes of this paper we are interested only in the Hilbert space case.

\subsection{Reproducing kernel Hilbert spaces }

As noted already by Shapiro and Shields \cite{Shapiro61} similar interpolation problems make sense in a much more general setting. Most of the material in this section can be found in \cite{AglerMcCarthy00}.

Suppose we are given a Hilbert space $\cH$ of analytic functions on the unit disc. We say that it has a reproducing kernel if there exists a $k:\DD \times \DD \mapsto \CC$ such that \[k_z:=k(\cdot,z) \in \cH, \,\,\, \forall z\in \DD \] and 
\[ f(z)=\inner{f}{k_z}_\cH, \,\,\, \forall f\in \cH. \]
To such a space we can associate the {\it multiplier algebra} \[ \cM(\cH):= \{ g:\DD\mapsto \CC : gf\in \cH, \,\,\, \forall f\in\cH \}. \]
It can be proven that equipped with the natural norm, $\cM(\cH)$ is a commutative Banach subalgebra of $H^\infty.$ A sequence $\{z_i\}$ is usually called multiplier interpolating, or interpolating for $\cM(\cH)$  if the restriction operator 
\begin{align*}  T_\cM : & \cM(\cH) \mapsto \ell^\infty \\
& g \mapsto \{g(z_i)\} \tag{MI}
 \end{align*}
is surjective. 

In analogy to the Hardy space  we can define a weighted restriction operator on $\cH$ as follows 
\begin{align*}
    T_\cH : & \cH \dashrightarrow \ell^2 \\
   & f \mapsto \Big\{ \frac{f(z_i)}{\norm k_{z_i}\norm_\cH} \Big\}.
\end{align*}
The dashed arrow indicates the fact that without further assumptions on $\{z_i\}, T_\cH $ might not be defined everywhere.

\begin{defin}Let $\cZ = \{ z_i \}$ a sequence. We say that $\cZ$ is 
\begin{itemize}
    \item {\it Onto interpolating} (OI) if $T_\cH$ is surjective,  \label{OIH} 
    \item {\it Universally interpolating } (UI) if it is onto interpolating and $T_\cH$ is bounded as linear operator. \label{UIH}
\end{itemize}
\end{defin}
The boundedness condition can be tautologically translated to a Carleson measure condition, specifically that the measure 
\[  \mu_\cZ : = \sum_{z\in \cZ} \frac{\delta_z}{\norm k_z \norm_\cH^2} \tag{CM}\]
is a Carleson measure for $\cH$, i.e. $\cH \subseteq L^2(\DD,\mu_\cZ).$

The separation condition appearing in the theorem of Carleson is a more delicate matter. We shall say that the sequence $\cZ$ is {\it weakly separated} if there exists $\varepsilon > 0 $ such that 
\[ \frac{|\inner{k_{z_i}}{k_{z_j}}|^2}{ \norm k_{z_i} \norm^2 \norm k_{z_j}\norm^2}  < 1-\varepsilon, \,\,\, \forall i \neq j \tag{WS}.\]

In the literature appears also a stronger notion of separation. We say that $\cZ$ is {\it strongly separated} if there exists $C_\cZ$ and $\{f_i \} \subseteq \cH$ such that 
\begin{align*}
    & \norm f_i \norm \norm k_{z_i} \norm \leq C_\cZ \\
    & f_i(z_j)=\delta_{ij}, \,\,\, \forall i,j. \tag{SS} \label{StrongSep}
\end{align*}

It is by now known that in a large class of spaces, including the Hardy space, the Dirichlet space (see next section) and certain weighted versions of them 
that 
\[ (MI) \iff (UI) \iff (CM) + (WS).  \]

In fact in a recent breakthrough paper \cite{Aleman17} Aleman, Hartz, McCarthy and Richter prove this result for all spaces satisfying the so called {\it complete Nevanlinna Pick property} \footnote{Here we will not enter into the details about spaces with the complete Nevanlinna Pick property but the interested reader can find more information on such spaces in the monograph \cite{AglerMcCarthy00}.}. The Dirichlet space is one of the most prominent examples of spaces satisfying this property \cite{Shapiro1962}. We can therefore say that we understand universally interpolating sequences very well. The problem reduces in each concrete case to the problem of describing the Carleson measures of the space. See also \cite{Boe2005} for developments regarding this problem prior to \cite{Aleman17}.

\subsection{Onto interpolation and the Dirichlet space}

The situation regarding onto interpolating sequences is more intricate. For one thing, in the Hardy space,  such sequences are automatically universally interpolating. On the other hand, one would be wrong to believe that this happens in every complete Nevanlinna Pick space. It is time to  introduce the main actor of this paper.

The Dirichlet space $ \mathcal{D} $ is defined as the space of analytic functions $ f $ in the unit disc $ \DD $ such that 
\begin{equation*}
\normD{f}^2 := \normH{f}^2+\int_{\DD}|f'|^2dA < + \infty,
\end{equation*} where $ \normH{\cdot} $ is the Hardy norm and $ dA=dxdy/\pi $.  It can be veryfied that with this norm $ \mathcal{D} $ is a reproducing kernel Hilbert space, with reproducing kernel 
\begin{equation*}
k_z(w)=k(w,z)=\frac{1}{w\overline{z}}\log\frac{1}{1-w\overline{z}}, \,\, z,w\in\DD
\end{equation*} which has the complete Nevanlinna Pick property \cite[p. 58]{AglerMcCarthy00}. The norm of the kernel vectors is 
\[  \norm k_z \norm_\cD^2 = \frac{1}{|z|^2}\log\frac{1}{1-|z|^2}=:d(z).\]
If we write $d_h(\cdot,\cdot)$ for the hyperbolic distance in the unit disc
\[ d_h(r,0) = \log_2\frac{1+r}{1-r}, \,\,\, 0<r<1, \]
one can see that 
\[ d(z) \approx d_h(0,z)+1. \]
 The first to study interpolation problems in the Dirichlet space have been  Bishop \cite{Bishop94} and Marshall \& Sundberg \cite{Marshall94}. Their work, unfortunately, remains unpublished but most of their results can be found also in other sources. As it is to be expected universally interpolating sequences are characterized by the weak separation condition and the Carleson measure condition. In this concrete situation  weak separation is equivalent to say that there exists $ \varepsilon>0$ such that 
 \[ d_h(z_i,z_j) \geq\varepsilon (d_h(z_i,0)+1) ,\,\,\, \forall i\neq j. \tag{WS$_\mathcal{D}$} \label{WeakSep} \] 
 About strong separation what can be said is that it is equivalent to the fact that there exists a sequence $\{m_i\} \in \cM(\cD)$, uniformly bounded in the multiplier norm, such that 
 \[ m_i(z_j) = \delta_{ij}, \,\,\, \forall i,j. \]
 
 The classical characterization of Carleson measures for the Dirichlet space in terms of logarithmic capacity given by Stegenga \cite{Stegenga80} is the following. We denote by $c$ the logarithmic capacity of compact subsets of $\TT$ and we also adopt the notation $I_z:= \overline{S(z)}\cap \TT$. Then a measure is Carleson  for the Dirichlet space if and only if it satisfies the following {\it sub-capacitary condition}, i.e.  there exists $C_\mu > 0 $ such that  for any  $z_1, \dots z_k \in \DD$
 \[ \mu\Big( \bigcup_{i=1}^k S(z_i) \Big)\leq C_\mu c\Big( \bigcup_{i=1}^kI_{z_i} \Big). \tag{SC} \label{SubCap}\]

Already Bishop notes that if the measure associated to the sequence satisfies the one box sub-capacitary condition ($k=1$ in \eqref{SubCap}) the sequence is onto interpolating, and he constructs a sequence which is onto but not universally interpolating (looking back it is clear that the same result is implicit in the work of Marshall and Sundberg). The one box sub-capacitary condition was also noted by B\o e \cite[Corollary 5.1]{Boe}. Bishop also proves that strong separation is equivalent to onto interpolation.

Another contribution comes from the work of Arcozzi Rochberg and Sawyer \cite{Arcozzi16}, which can be found in a published form in \cite{Arcozzi2020}. They prove that not only the one box sub-capacitary condition together with weak separation is not necessary for onto interpolation but they even construct sequences $\cZ$ such that the associated measure is {\it infinite}. 

To see why this result is somewhat surprising, it helps consider the connection with zero sets in the Dirichlet space. A sequence $\cZ$ is called a {\it zero set} for the Dirichlet space if there exists $f\in\cD$ not identically zero, such that $f(z)=0, \forall z\in\cZ.$ A characterization of zero sets is a notoriously difficult problem (see \cite{Carleson1952} \cite{Shapiro1962} \cite{Richter2004} \cite{Kellay2011} and \cite{Mashreghi2010}). Nonetheless,  every onto interpolating sequence $\{z_i\}$ is automatically a zero set. That is because we can find an $f\in\cD$ such that $f(z_0)=1, f(z_i)=0, \forall i\geq 1 $, then the Dirichlet function $(z-z_0)f(z)$ is not identically zero and it vanishes on $\cZ$.

One of the most general sufficient criteria is the following due to Shapiro \& Shields \cite{Shapiro1962}. A sequence $\{z_i\}$ is a zero set for $\cD$ if 
\[ \sum_{i=1}^\infty \frac{1}{d(z_i)} < +\infty, \]
Or in our language, has a finite associated measure \footnote{In fact the analogous condition in the Hardy space $H^2$  is exactly the Blaschke condition which characterized completely the zero sets in the Hardy space}. Furthermore this result is sharp in the sense that any other sufficient criterion for a zero sequence must  depend not only on $\{ |z_i| \}$ but also on their argument \cite{Nagel1982}.  So, if a sequence has infinite associated measure is not always clear if it is a zero set, let alone an onto interpolating sequence. 

\subsection{Main results}
In this direction we prove a capacitary characterization of strongly separated sequences and therefore onto interpolating sequences, in the Dirichlet space, under the additional assumption that the associated measure is finite. Due to the aforementioned results of Arcozzi Rochberg \& Sawyer \cite{Arcozzi16} this does not constitute a full characterization of onto interpolating sequences.

Our result involves an interesting condenser capacitary condition which we will now discuss. 

Suppose $z\in \DD\setminus \{0\}$, as we saw earlier the Carleson boxes $S(z)$ fit well with the geometry of the Hardy space, but often for the geometry of the Dirichlet space space one needs to modify them. Let $0<\eta\leq1$ and $z^*:=z/|z|$ we define the {\it  blow up}  of the Carleson box  
\[ S^\eta(z):=\{ w\in \DD : w\in S(z^*(1-(1-|z|)^\eta), |w| \geq |z| \}. \]
Consistently with our previous notation we write $I^\eta_z := \overline{S^\eta(z) \cap \TT}$. Note that everything reduces to the standard situation when $\eta =1 $. We denote by $\Delta_r(z)$ the hyperbolic disc or hyperbolic radius $r$ centered at $z$.  See also Figure \ref{Configuration}.

As mentioned already our condition involves the capacity of a condenser $\capacity_\DD(E,F)$ for two disjoined $F_\sigma$ subsets of $\overline{\DD}$. Its definition is classical but we recall it in Section \ref{CapacityCondensers}.

We can now formulate our condition.  We say that a weakly separated sequence $\cZ := \{z_i\} $ satisfies the \textit{capacitary condition} if there exist constants $ K>0, \gamma<1 $, depending only on $\cZ$ such that,
 
 \begin{equation*}\tag{CC}\label{CAPACITARY}
 \capacity_\DD \Big( \Delta_1(z_i) , \bigcup_{z \in S^\gamma(z_i)\cap \cZ}I_{z}  \Big) \leq  \frac{K}{d(z_i)}, \,\,\, \forall z_i\in \cZ.
 \end{equation*}
 The meaning of this otherwise obscure inequality is physically quite simple. If one considers  a condenser with one plate a hyperbolic disc of constant radius around a point of the sequence, and  as second plate the union of the intervals $I_z$ for all other points in the sequence in the ``vicinity" of $z_i$, then an electric charge of one unit in one plate, creates an electric field of total energy which bounds asymptotically  the hyperbolic distance of $z_i$ to the origin. 
 
 The plates of the condenser in Figure \ref{Configuration} are marked with grey and bold intervals respectively.
 
 \begin{figure}[t] 
\includegraphics[scale = 3.5]{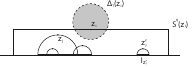}
\centering
\caption{A possible configuration of points.}
\label{Configuration}
\end{figure}

\begin{theorem}\label{Theorem A}Let $ \{z_i \} $ be a sequence in the unit disc which has finite associated measure, i.e. $ \sum_{i=1}^{\infty}1/d(z_i)<+\infty $. Then, $ \{z_i\} $ is onto interpolating for the Dirichlet space iff it is weakly separated and satisfies the capacitary condition.

\end{theorem}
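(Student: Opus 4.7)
The starting point is the equivalence
\[
(\text{OI}) \iff (\text{SS})
\]
in the Dirichlet space, which holds under no assumption on the sequence: one direction is the standard open mapping argument sketched in the introduction, the other is Bishop's theorem. Hence, under the finite-measure hypothesis $\sum_i 1/d(z_i)<+\infty$, the theorem reduces to proving
\[
(\text{SS}) \iff (\text{WS}) + (\text{CC}).
\]
Since (WS) follows a fortiori from (SS), the content lies in the two implications (SS)$\Rightarrow$(CC) and (WS)+(CC)$\Rightarrow$(SS), with the finiteness of $\mu$ entering crucially only in the sufficiency direction.

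For the necessity direction (SS)$\Rightarrow$(CC), I fix $i$ and consider the interpolating function $f_i\in\mathcal{D}$ with $f_i(z_j)=\delta_{ij}$ and $\normD{f_i}^2\leq K/d(z_i)$. Composing with $\varphi_{z_i}$ and using Möbius invariance of the Dirichlet seminorm, the function $g_i:=1-f_i\circ\varphi_{z_i}$ satisfies $g_i(0)=0$, $g_i(w_j)=1$ for $w_j:=\varphi_{z_i}(z_j)$, $j\neq i$, and $\int_\mathbb{D} |g_i'|^2\,dA\leq K/d(z_i)$. The proof is then completed by a boundary-capacitary lemma of the type: if $u\in\mathcal{D}$, $u(0)=0$, and $|u(w)|\geq 1$ on a discrete set $W\subset\mathbb{D}$, then $C\bigl(\bigcup_{w\in W}I_w\bigr) \leq C_0\int_\mathbb{D}|u'|^2\,dA$. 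This should follow by a nontangential maximal function argument combined with the weak-type capacitary inequality for Dirichlet functions; applied to $W=\{w_j:z_j\in\mathcal{V}_\gamma(z_i)\}$, it yields (CC).

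For the sufficiency direction, I work in coordinates centered at $z_i$ (i.e.\ after pull-back by $\varphi_{z_i}$). Setting $E_i:=\bigcup_{z_j\in\mathcal{V}_\gamma(z_i)}I_{w_j}$, the hypothesis (CC) gives $C(E_i)\leq K/d(z_i)$. I take $U_i$ to be the logarithmic equilibrium potential of $E_i$, which satisfies $U_i(0)=0$, $U_i\geq c$ quasi-everywhere on $E_i$, and $\normD{U_i}^2\lesssim C(E_i)\lesssim 1/d(z_i)$. A mean-value / Harnack argument shows $U_i(w_j)\geq c'>0$ for every vicinity point $w_j$, since $w_j$ sits close to its shadow arc $I_{w_j}\subset E_i$. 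To turn this into a function vanishing at the \emph{entire} sequence (not only the vicinity), I would multiply by an auxiliary factor built from the far points: weak separation together with $\sum 1/d(z_j)<+\infty$ makes a Blaschke-type product, or an exponentiated Cauchy-sum, converge with controlled Dirichlet norm and vanish at every $z_j\notin\mathcal{V}_\gamma(z_i)$. Combining, renormalizing at $w_i=0$, and invoking the complete Pick property to pass from the resulting function to a multiplier, one arrives at (SS).

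The main obstacle is the sufficiency step, specifically the simultaneous control of the \emph{local} capacitary factor (with sharp norm $\lesssim 1/d(z_i)$) and the \emph{global} factor killing far points, without losing the norm bound. The vicinity notion $\mathcal{V}_\gamma(z_i)$ is designed precisely to segregate these two regimes, but delicate estimates are needed to show that the far-point correction does not destroy the values at the vicinity points, and that the product stays in $\mathcal{D}$ with the right norm. Here the finiteness of $\mu$ is used twice: first, to ensure summability of the contributions from the far points via the Carleson embedding (which, combined with (CC), is automatic); second, to make the infinite product converge to a nontrivial function with controlled multiplier norm. The role of the Pick property is to lift the resulting function-interpolation datum to the multiplier-interpolation formulation of (SS) used to reach (OI).
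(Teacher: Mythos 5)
Your top-level reduction agrees with the paper: both routes go through $(\mathrm{OI})\Leftrightarrow(\mathrm{SS})$ and then aim at $(\mathrm{SS})\Leftrightarrow(\mathrm{WS})+(\mathrm{CC})$. The necessity direction is essentially sound: your weak-type capacitary inequality for the nontangential maximal function of a Dirichlet function is an acceptable substitute for the paper's condenser machinery (Lemma \ref{StrechingCapacity}, Proposition \ref{ComparisonCapacities}, Lemma \ref{LogarithmicCondensers}), which accomplishes the same passage from ``$f_i$ is large at the points $w_j$'' to ``the shadow arcs $I_{w_j}$ have small capacity.''

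The sufficiency direction, however, has a genuine gap. Your construction (equilibrium potential of $E_i$, turned into a factor that is small where the potential is large, times a Böe-type factor for the far points) produces a function $f_i$ with $f_i(z_i)=1$, $\normD{f_i}^2\lesssim 1/d(z_i)$, and $|f_i(z_j)|$ \emph{small} for $j\neq i$ --- but $(\mathrm{SS})$ requires $f_i(z_j)=0$ \emph{exactly}, and nothing in your plan converts smallness into exact vanishing. The two mechanisms you hint at do not close this gap: a Blaschke-type or Pick-extremal product over the other points has controlled norm only when $\sum_{j}\bigl(1-d_\mathcal{D}(z_i,z_j)^2\bigr)\leq C$, which is the strictly stronger ``weak simple'' hypothesis of Theorem \ref{THEOREM D}, not a consequence of $(\mathrm{CC})$ plus $\sum_j 1/d(z_j)<\infty$; and correcting the small errors by solving another interpolation problem is circular as stated. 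The paper's key idea, which is missing from your proposal, is an inductive bootstrap on finite subsets: one shows $M_{N+1}\leq 4C_0+1$ assuming $M_N\leq 4C_0+1$, by invoking a \emph{quantitative} version of Bishop's theorem (so that every $N$-point subset has onto-interpolation constant $\leq C_1$ depending only on $C_0$) to produce a correction $g_i$ with $g_i=-f_i$ on the other $N$ points and $\normD{g_i}^2\leq C_1\sum_{j\neq i}|f_i(z_j)|^2/d(z_j)\leq C_0C_1e^{-d(z_i)}\sum_j 1/d(z_j)$; the finite-measure hypothesis enters precisely here to make this correction harmless, and $h_i=(f_i+g_i)/(f_i(z_i)+g_i(z_i))$ then witnesses strong separation with a constant independent of $N$. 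Without this bootstrap (or an equivalent device), your argument establishes only an approximate version of $(\mathrm{SS})$ and does not reach $(\mathrm{OI})$. Note also that, for this scheme, citing Bishop's theorem as a qualitative black box is not enough: the induction needs the uniform dependence of the onto-interpolation constant on the strong-separation constant, which is why the paper re-proves it in quantitative form.
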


The proof of Theorem \ref{Theorem A} is constructive in the sense that for given data we construct a function which solves the interpolation problem. In the literature there are two main ways to construct Dirichlet functions which solve interpolation problems, either of universal or onto type. They are both based on some kind of building blocks but the constructions are quite different. The first one, initiated by B\o e in \cite{Boe} was used to solve the universal interpolation problem in Besov spaces, and later exploited further by Arcozzi Rochberg and Sawyer in \cite{Arcozzi16} to give sufficient conditions for onto interpolation in the Dirichlet space. It should be mentioned that traces of B\o e's construction can be found in the work of Marshall and Sundberg  \cite{Marshall94}. The second construction is due to Bishop \cite{Bishop94}, and makes use of conformal mappings. In this work we combine both approaches and we construct building blocks that have the best of both worlds, in the sense that the relevant error terms arising are easier to control. We should also mention that the abstract approach of Aleman Hartz McCarthy and Richter \cite{Aleman17}, does not seem to be able to give any advantage in this concrete situation.

Another feature of our construction is that we use an iterative scheme of interpolation which is based on a quantitative version of the Theorem 
\[ (SS) \iff (OI) \]
of Bishop \cite{Bishop94}. 
To be more precise we should first quantify the conditions $(SS)$ and $(OI)$. Let $\cZ$, as always, be a sequence in the unit disc, we define its {\it strong separation constant}, denoted by $StrongSep(\cZ)$ as the infimum of all $C_\cZ>0$ such that $(SS)$ holds. Similarly for weak separation, we call weak separation constant the supremum over all $\varepsilon$ such that \ref{WeakSep} holds. If $\cZ$ is onto interpolating an application of the closed graph theorem provides a constant $C_\cZ>0$ such that for an $\alpha \in \ell^2$ there exists $f\in \cD$ such that 
\begin{align*}
    T_\mathcal{D} f & =\alpha \\
 \normD{ f } & \leq C_\cZ \norm\alpha \norm_{\ell^2}.
\end{align*}Again the infimum over all such constants we call it the {\it onto interpolation constant} of $\cZ$ and we write $OntoInt(\cZ).$
We are justified therefore to call the next theorem a quantitative version of Bishop's Theorem. 

\begin{theorem}\label{Theorem B}
    Let $K_0>0$. If a sequence $\cZ$ satisfies $StrongSep(\cZ) \leq K_0$, then 
    \[OntoInt(\cZ) \leq C_{K_0}, \]
    where $C_{K_0}$ depends only on $K_0$ and not on $\cZ$.
\end{theorem}

The proof of this theorem depends largely on the original proof of Bishop \cite{Bishop94}, but it requires a careful extraction of the relevant constants. 
\subsection{Connections with non analytic interpolation in \texorpdfstring{$H_1(\DD)$}{H1(D)}}
Next we consider the problem of onto interpolation in the Sobolev space $ H_1(\DD) $, the space of $ L^2(\DD) $ functions on the unit disc with weak partial derivatives of first order also in $ L^2(\DD) $. In this space pointwise evaluations are not well defined, therefore the definition of interpolation has to be somewhat different.  We shall say that a sequence $ \{ z_i \} $ is onto interpolating for $ H_1(\DD) $ if there exists $ \varepsilon>0 $ such that for any $ \alpha = \{ a_i \} \in \ell^2(\NN) $, there exists $ u\in H_1(\DD) $ such that $ u|_{\Delta_\varepsilon(z_i)} \equiv \sqrt{d(z_i)} \cdot a_i $. We choose the weights $ d(z_i) $ in the definition of (OI) sequences for $ H_1(\DD) $ in analogy with the holomorphic case.

In this case we have a complete characterization of onto interpolating sequences.

\begin{theorem} \label{Theorem C}
A sequence $ \{z_i\} \subseteq \DD $ is onto interpolating for $ H_1(\DD) $ iff it is weakly separated and satisfies the capacitary condition. 

\end{theorem}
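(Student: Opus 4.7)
I will prove the two implications separately, exploiting the greater flexibility of $W^{1,2}(\mathbb{D})$ over the holomorphic space $\mathcal{D}$: $W^{1,2}$ is closed under multiplication by smooth cutoffs and under pointwise truncation, both of which are unavailable inside $\mathcal{D}$. This extra flexibility is what allows dispensing with the finite-measure hypothesis of Theorem~\ref{Theorem A}.

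For necessity, I would apply the open mapping theorem to the bounded surjection (from the closed subspace of $W^{1,2}(\mathbb{D})$ consisting of functions constant on each $\Delta_\varepsilon(z_i)$, onto $\ell^2$) that is implicit in the interpolation hypothesis. This produces, for each $i$, a function $u_i \in W^{1,2}(\mathbb{D})$ that equals $1$ on $\Delta_\varepsilon(z_i)$, vanishes on $\Delta_\varepsilon(z_j)$ for $j \neq i$, and satisfies $\|u_i\|_{W^{1,2}}^2 \leq C/d(z_i)$. To obtain weak separation in the Dirichlet metric, I pass from $u_i$ to the holomorphic function whose real part is the harmonic extension of its boundary trace: the Dirichlet norm is controlled by the $W^{1,2}$ norm, and interior regularity transfers the separation of values at $z_i$ versus $z_j$ up to harmless constants. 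For the capacitary condition I would run a variational argument: if the capacity of $\bigcup_{z_j \in \mathcal{V}_\gamma(z_i)} I_{\varphi_{z_i}(z_j)}$ exceeded $C/d(z_i)$, then any separating $u_i$ would be forced to have Dirichlet energy larger than $C/d(z_i)$, contradicting the norm bound above; this step uses Stegenga-type capacity estimates together with the geometry of the expanded Carleson boxes $S^\gamma(z_j)$.

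For sufficiency, given $\{a_i\} \in \ell^2$, I would construct, for each $i$, an atom $\varphi_i \in W^{1,2}(\mathbb{D})$ with $\varphi_i \equiv 1$ on $\Delta_\varepsilon(z_i)$, $\varphi_i \equiv 0$ on $\Delta_\varepsilon(z_j)$ for $j \neq i$, $\|\varphi_i\|_{W^{1,2}}^2 \leq C/d(z_i)$, and with controlled pairwise overlap, and then set $u = \sum_i \sqrt{d(z_i)}\, a_i\, \varphi_i$. These atoms would be manufactured by adapting the B\"oe/Bishop-style holomorphic building blocks from the proof of Theorem~\ref{Theorem A}, multiplied by a real smooth cutoff localized to a region whose complement has small logarithmic capacity; the existence of such a cutoff region is granted by (CC), and the non-interference of its boundary with the discs $\Delta_\varepsilon(z_j)$ by (WS). The truncation step, impossible to perform while remaining in $\mathcal{D}$, is precisely what allows the finite-measure hypothesis to be removed. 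Convergence of the series then follows from the $\ell^2$ hypothesis together with pointwise overlap control, yielding $\|u\|_{W^{1,2}}^2 \lesssim \sum_i |a_i|^2$.

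The main obstacle throughout is the simultaneous control of three competing demands on the atoms $\varphi_i$: the norm bound of order $1/\sqrt{d(z_i)}$, a support that avoids every other disc $\Delta_\varepsilon(z_j)$, and a uniform bound on overlap so that the superposition converges in $W^{1,2}$ for every $\ell^2$ input. The key input enabling this balance is precisely the capacitary condition, which governs both the availability of the cutoff region and the summability of the cross terms $\int \nabla \varphi_i \cdot \nabla \varphi_j\, dA$; it is this dual role of (CC) that ties the present argument tightly to the technology developed for Theorem~\ref{Theorem A}.
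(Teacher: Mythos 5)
Your overall architecture matches the paper's: necessity via the open mapping theorem applied to the restriction operator on the subspace of functions constant on the discs, followed by a condenser-capacity reading of the resulting norm bounds; sufficiency via B\"oe/Bishop-type atoms cut down by a capacitary factor. Two points, however, deserve comment. In the necessity direction your detour through ``the holomorphic function whose real part is the harmonic extension of the boundary trace'' is unnecessary and not clearly sound (the harmonic extension of the trace of $u_i$ need not separate the values at $z_i$ and $z_j$); the paper obtains weak separation purely in real-variable terms, from the estimate $\capacity_\mathbb{D}(\Delta_1(z_i),\Delta_1(z_j)) \sim 1/d(z_i,z_j)$ combined with the bound $\capacity_\mathbb{D}(\Delta_1(z_i),\Delta_1(z_j)) \leq C/d(z_i)$ that the functions $u_i$ furnish, and then reads off (CC) from Proposition \ref{ComparisonCapacities} and Lemma \ref{LogarithmicCondensers}. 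Stegenga's theorem plays no role here.

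The genuine gap is in your sufficiency argument. You plan atoms with ``controlled pairwise overlap'' and propose to handle the ``cross terms $\int \nabla\varphi_i\cdot\nabla\varphi_j\,dA$'', but you give no mechanism for summing these over all pairs against arbitrary $\ell^2$ data, and for an infinite-measure sequence this is exactly where a naive estimate breaks down. The paper's resolution is to make the supports \emph{pairwise disjoint}, not merely controlled in overlap: each building block $w_i = u_i v_i$ (the truncated B\"oe factor times the capacitary cutoff supplied by (CC) via Lemmas \ref{CondensersBlowUp} and \ref{ComparisonCapacities}) is supported in the region
\begin{equation*}
S_i = S^{\gamma}(z_i)\setminus \bigcup_{z_j\in\mathcal{V}_{\gamma}(z_i)} S^{\gamma}(z_j),
\end{equation*}
and Lemma \ref{Regions} (which rests on weak separation through Lemma \ref{SeparationLemma}) shows the $S_i$ are pairwise disjoint and that $\Delta_1(z_j)\cap S_i=\emptyset$ for $j\neq i$. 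Consequently $w_i|_{\Delta_1(z_j)}\equiv\delta_{ij}$ and the energy of $F=\sum_i a_i\sqrt{d(z_i)}\,w_i$ splits as a sum over the disjoint sets $S_i$ with no cross terms at all, giving $\|F\|_{W^{1,2}}^2\lesssim\sum|a_i|^2$ immediately. Without this disjointness your superposition step is unproved: you would need to show that the off-diagonal part of the Gram matrix of the atoms is bounded on $\ell^2$, which is a nontrivial claim you neither state nor derive from (CC).
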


From this theorem we can derive a useful corollary.

\begin{colo}
Suppose that $\cZ$ is a sequence of points in the unit disc with finite associated measure. Then it is onto interpolating for the Dirichlet space if and only if it is onto interpolating for $H_1(\DD)$.
\end{colo}

\subsection{Other results about onto interpolation}
In order to illustrate the power of our results we will prove a sufficient condition for onto interpolation which generalizes the so called weak simple condition\footnote{Not to be confused with the weak separation condition which in \cite{Arcozzi16} is called just ``separation".} of Arcozzi Rochberg and Sawyer \cite[Theorem A]{Arcozzi16} which in its turn generalizes the Bishop's one box subcapacitary condition.

In analogy with the weak simple condition of Arcozzi et al, given a sequence $\{z_i\}$ and $\gamma<1 $ we shall say that $z_i$ has {\it $\gamma$-uninterrupted view} of $z_j$ if there exists no other point $z_k \in S^\gamma(z_i)$ in the sequence such that $S^\gamma(z_k) \supseteq S(z_j) .$ Therefore the next theorem implies \cite[Theorem A]{Arcozzi16} for $\gamma=1.$

\begin{theorem}\label{THEOREM D}
	Let $ \{ z_i \} $ a weakly separated sequence with constant of weak separation $\varepsilon>0$. Assume also that there exists a  $\gamma\in(1-\varepsilon,1]$ such that  \begin{equation*}
\sum\frac{1}{d(z_j)} \leq \frac{C}{d(z_i)},
\end{equation*}  where the sum is taken over all $z_j$ in the sequence such that $z_i$ has $\gamma$ - uninterrupted view of $z_j$. Then it satisfies the capacitary condition.
\end{theorem}

 In analogy with the case of universal interpolation, a natural property for the capacitary condition that one could ask is to respect unions. More precisely, if $ \{z_i\}, \{w_i\} $ are universally interpolating sequences such that their union is weakly separated, then $ \{z_i\}\cup\{w_i\} $ is also universally interpolating, simply because the sum of two Carleson measures is a Carleson measure. We prove that not only this is not true in the case of onto interpolating sequences but we have the following stronger failure. 
 
 \begin{theorem}\label{Thoerem F}
 	There exist sequences $ \{z_i\}, \{w_i\} $ in $ \DD $ such that $ \{z_i\} $ is universally interpolating and $ \{w_i\} $ is onto interpolating for the Dirichlet space both with finite associated measures, their union is weakly separated but it is not an onto interpolating sequence.
 \end{theorem}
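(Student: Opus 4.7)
By Theorem \ref{Theorem A}, for sequences with finite associated measure, (OI) is equivalent to (WS)+(CC), so the plan is to construct $\{z_i\}$ and $\{w_j\}$ with the required individual interpolation properties and a weakly separated union that nevertheless fails the capacitary condition at infinitely many points. The conceptual driver is that the vicinity $\mathcal{V}_\gamma(\cdot)$ depends on the ambient sequence, so merging two sequences creates at each $z_i$ a vicinity drawn from both pieces whose pulled-back arcs can have capacity far larger than what either piece alone produces.

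First I would take $\{z_n\}$ to be a sparse (UI) sequence, for instance with $z_n=(1-\epsilon_n)e^{i\theta_n}$ lying in pairwise disjoint Carleson sectors and $\epsilon_n\downarrow 0$ super-exponentially (say $\epsilon_n=e^{-n^3}$), so that strong separation is manifest and Stegenga's condition is trivially verified; (UI) then follows from Bishop--Marshall--Sundberg. Inside each expanded box $S^\gamma(z_n)$ I would then deposit a cluster $\{w_{n,k}\}_{k=1}^{M_n}$ at a common deeper radius $1-\delta_n$ with $\delta_n\leq\epsilon_n$, angularly equispaced, and with pairwise disjoint Carleson sub-boxes. The parameters $(\delta_n,M_n)$ are tuned so that $\sum_n M_n/d(w_{n,k})<+\infty$ (finite associated measure) and $d(z_n)\,C\bigl(\bigcup_{k=1}^{M_n} I_{\varphi_{z_n}(w_{n,k})}\bigr)\to+\infty$ as $n\to\infty$.

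With this setup, the verification proceeds in four steps: (i) $\{w_{n,k}\}$ is (WS) because different clusters sit in disjoint sectors while within a single cluster the angular spacing dominates the radial scale; moreover each $w_{n,k}$ has empty vicinity inside $\{w\}$ because its sub-box was chosen isolated, so (CC) is trivial and Theorem \ref{Theorem A} yields (OI); (ii) the union is (WS) because points in different sectors are pseudo-hyperbolically far apart and, within one sector, $z_n$ lies much shallower than any $w_{n,k}$ so the pseudo-hyperbolic distance is close to $1$; (iii) in the union the vicinity $\mathcal{V}_\gamma(z_n)$ now contains \emph{every} $w_{n,k}$, and the divergence above directly contradicts (CC) at $z_n$ for large $n$, so the union is not (OI); (iv) $\{z_n\}$ itself is verified (UI) independently.

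The principal technical obstacle is arranging the capacitary blow-up while preserving summability of the associated measure and weak separation of the union. One needs a useable lower bound for the logarithmic capacity of the family of disjoint short arcs $I_{\varphi_{z_n}(w_{n,k})}$: if these images spread over a sub-arc of length $L_n$ with individual arc lengths $\ell_n$ and count $M_n$, the capacity is bounded below by a quantity like $1/\log(L_n/(M_n\ell_n))$ in the sparse regime, or comparable to $L_n$ once overlaps start; meanwhile the cluster's mass contribution is of order $M_n/\log(1/\delta_n)$. A careful choice of $(\epsilon_n,\delta_n,M_n)$ — with $M_n$ growing slowly, $\delta_n$ much smaller than $\epsilon_n$, and $\epsilon_n$ decaying very fast — makes all these requirements simultaneously achievable and closes the argument.
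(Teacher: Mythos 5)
Your construction is correct and is, at bottom, the same as the paper's: a sparse universally interpolating sequence of base points, each decorated with a cluster of many much deeper, mutually well separated points whose total mass is small but whose joint capacity, seen from the base point, is large compared with $1/d(z_n)$. The paper realizes each cluster as a ``comb'' on the Bergman tree ($M_n=\sqrt{N}$ teeth hanging at level $\approx 2N$ off the geodesic above a base point of level $N$, so the added mass is $\sim \sqrt{N}/N$ while the capacity seen from the base point is $\gtrsim 1/\sqrt{N}\gg 1/N$), proves failure of onto interpolation by invoking the Arcozzi--Rochberg--Sawyer \emph{tree} capacitary necessary condition and computing the tree capacity exactly via the recursive formula and a M\"obius-matrix iteration, and gets (UI) for the base points from Axler's subsequence theorem rather than by checking Stegenga's condition by hand. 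You instead stay in the disc and contradict the continuous condition (CC) through the necessity half of Theorem~\ref{Theorem A}; that is equally legitimate, since (OI) $\Rightarrow$ (SS) $\Rightarrow$ (CC) with a uniform constant, and that direction needs no finiteness of the measure. Two points to tighten: for the clusters to have empty vicinities you need the \emph{expanded} boxes $S^{\gamma}(w_{n,k})$ disjoint, not merely the Carleson boxes (your spacing affords this); and the capacity lower bound you leave as the ``principal technical obstacle'' is the real content, where your proposed form $1/\log\bigl(L_n/(M_n\ell_n)\bigr)$ is not the right one (it degenerates as the arcs become contiguous). The standard estimate for $M$ arcs of length $\ell$ equispaced in an arc of length $L$, obtained by testing the energy of the uniform unit measure, is $C(\cup_k I_k)\gtrsim \bigl(\tfrac1M\log\tfrac1\ell+\log\tfrac1L\bigr)^{-1}$; with it your parameter choice works and reproduces exactly the paper's numerology, and the paper's tree computation can be read as a discrete, exact version of this energy estimate.
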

 
\subsection{Organization of the paper}
Section \ref{CapacityCondensers} is a collection of definitions and known results together with some elementary estimates on capacities of condensers that will be used throughout. In Section \ref{SobolevInterpolation}  we give the proof of Theorem \ref{Theorem C}, that introduces some of the techniques that will be used later without involving the complications of analyticity. In Section \ref{BishopTheorem} we present a proof of the quantitative version of Bishop's Theorem . In Section \ref{DirichletInterpolation}   using the quantitative version of Bishop's Theorem we provide the proof of Theorem \ref{Theorem A}. Finally in Section \ref{Unions} we give the proofs of Theorems \ref{THEOREM D} and \ref{Thoerem F}.

\subsection{Notation}
For two quantities $ M,N \geq 0 $ which depend on some parameters we write $ M \lesssim N, $  if there exists some constant $ C>0 $, not depending on the parameters, such that $ M \leq C \cdot N  $. We will also write $ M \approx N $ if $ M \lesssim N  $ and $ N \lesssim M $. In statements of lemmas, propositions or theorems the dependence of constants on the parameters is denoted by subscripts. We usually write $c_0$ for an absolute constant. When we write $C$ we mean a general positive constant which might change from appearance to appearance, but it always depends on the same parameters.


\section{Condensers and Capacity.}\label{CapacityCondensers}
In this section we take a close look to the capacitary condition that appears in Theorem \ref{Theorem A}. The basic idea is that the capacitary condition can be stated equivalently in terms of logarithmic capacity. This is  the content of Proposition \ref{LogarithmicCondensers}.

Another tool we will develop in this chapter is a number of stability results. We would like to know that under certain operations on a condenser the capacity remains essentially the same.

We start with the standard definition of a plane condenser.

\begin{defin}
Let $ B $ be a Jordan domain in $ \mathbb{C} $ and $ E,F \subseteq \overline{B} $ compact disjoint sets. We call the triplet $ (B,E,F) $ a condenser with field $ B $ and plates $ E,F $. Its capacity is defined as 
\begin{equation*}
\capacity_B(E,F)=\inf_{u}\int_{B\setminus(E\cup F)}| \nabla u|^2dA
\end{equation*} where the infimum is taken over all  functions $ u\in  C(\overline{B}) $ which are uniformly Lipschitz continuous on compact subsets of $ B $ and $ u|_E=0, u|_F=1 $, we will call such functions admissible for the condenser $ (B,E,F) $. If there exists a minimizer for the Dirchlet integral, i.e. $ \capacity_B(E,F)=\int_{B\setminus (E\cup F)}|\nabla u|^2dA $ and $ u $ is admissible, we shall call it the equilibrium potential of the condenser.
\end{defin} 
 For more details on condenser capacities and equilibrium potential the reader is referred to \cite{Dubinin14}.

In case that $ E\cap F\neq \emptyset $ we employ the convention $ \capacity_\DD(E,F)=+\infty $, although we will take care so that the plates of our condensers do not intersect. In the general case that $ E,F $ are $ F_\sigma $ sets we consider $ E_n, F_n $ increasing sequences of sets such that $ \cup_nE_n=E, \,\, \cup_n F_n = F $ and we define $ \capacity_\DD(E,F):=\lim_n\capacity_\DD(E_n,F_n) $.

The capacity of a condenser is  conformaly invariant in the sense that if $ \xi $ is holomorphic in $ B $ and continuous and injective on $ \overline{B} $, $ \capacity_B(E,F)= \capacity_{\xi(B)}(\xi(E),\xi(F)) $, because every admissible function $ u $ for the second condenser gives an admissible function $ u\circ \xi $ for the first one with the same energy and vice versa. 

For a set $ E\subseteq \TT $ we shall write $ c(E) $ for its logarithmic capacity. For our purposes logarithmic capacity is defined as follows 

\begin{equation*}
c(E):=\capacity_\DD(\Delta_1(0),E).
\end{equation*}

It can be proven \cite{Adams} that this definition gives rise to a capacity which is comparable to the standard logarithmic capacity that one can find in the literature.

 We now turn to the condensers appearing in the capacitary condition and some variants. Suppose that we have a base point $ z\in\DD $ and a finite sequence of points $ z_1,\dots z_N \in\DD $. One can associate a number of condensers to this configuration of points. We are interested in three types of condensers
\begin{align*}
& (\DD,\Delta_1(z), \bigcup_{j=1}^{N} S(z_j)),  \\
& (\DD,\Delta_1(z), \bigcup_{j=1}^{N} \Delta_1(z_j) ),    \\
& (\DD, \Delta_1(z), \bigcup_{j=1}^{N}  I_{z_j} ).
\end{align*}

Some justification is necessary. First of all let us mention that although such type of condensers do not appear explicitly in the literature, one can trace this construction back in the work of Bishop \cite[Theorem 1.2]{Bishop94}, although the condensers appearing there is of ``analytic nature" meaning that the admissible functions are required to be analytic. 
On the other hand, in the work of Arcozzi Rochberg and Sawyer\cite{Arcozzi16} the authors characterize onto interpolating sequences for a Dirichlet type space defined on a tree in terms of a discrete condenser capacity reminiscent of our definition (see the tree capacitary condition \cite[p.6]{Arcozzi16}).

Initially we will work with condensers of the third type, but as it turns out, under the separation hypothesis all condensers have comparable capacity.

\subsection{Condensers and capacity blow up}

We proceed now to the first of our stability results. Suppose we are given an arc $ I\subseteq \TT, \kappa>0, 0<\eta\leq1 $ then we define $ \kappa\cdot I^\eta $  as the arc having the same midpoint with $ I $ and length $ \kappa|I|^\eta $. Then, in general if $ G\subseteq \TT $ is an open set
we define the ``blow up" $ \kappa\cdot G^\gamma $ naturally as
\[ \kappa\cdot G^\eta := \bigcup_{I\subseteq G}\kappa\cdot I^\eta.  \]

Note that  when $ \eta<1 $  the ``exponential blow up" due to $ \eta $ has a far bigger effect that the ``scalar blow up" due to $\kappa$. The following observation is due to Bishop \cite{Bishop94}. Another proof of this fact using potential theory on trees exists implicitly in \cite[Lemma 2.7]{Arcozzi10}. 
\begin{lem}\label{BlowUp}
	Let $ G \subseteq \TT $  an open set  $ \kappa>0 $, $ 0<\eta<1 $. There exists a constant $ C_{\kappa,\eta}>0$ such that
	\begin{equation*}
	c(\kappa \cdot G ^\eta) \leq C_{\kappa,\eta} c(G).
	\end{equation*}
\end{lem}
In the next lemma $\omega$ stands for the harmonic measure.

\begin{lem}\label{HarmonicMeasureBlowup}
	Let $ I \subseteq \TT $, $ z\in \DD, |z|\geq 1/2 $ and $ 0<\eta<1 $. If $ |I|^\delta \leq 1-|z| $ for some $ 0<\delta<\eta<1 $ then, 
	
	\begin{equation*}
	\omega(z,I^\eta,\DD) \leq C_{\delta,\eta} \omega(z,I,\DD)^\alpha,
	\end{equation*} for some $ \alpha>0 $ which depends on $ \delta $ and $ \eta $ but not on $ I ,z  $. In fact the estimate is true if we choose $ \alpha=\frac{\eta-\delta}{1-\delta} $.
\end{lem}

\begin{proof}
	Without loss of generality we can assume that $ I=[0,\sigma]:=\{ \eit{2\pi\theta}: 0\leq\theta \leq \sigma \} $. Since $ I^\eta \subseteq [0,\sigma^\eta]\cup [\sigma-\sigma^\eta, \sigma] := I_+^\eta \cup I_-^\eta $ it suffices to prove the inequality only for the interval $ I_+^\eta $.
	
	Now let $ z=r \eit{\theta}  $ as in the statement. We can write $ 1-r=\sigma^\rho $ for some $ 0<\rho \leq \delta < \eta $, and $ \theta = \sigma ^ x, x \in \mathbb{R} $.
	The standard estimate for the harmonic measure of an arc gives
	\begin{equation} \label{ExpandedInterval}
	\omega(z,I^\eta, \DD) \approx \int_{-\sigma^{x-\rho}}^{\sigma^{\eta-\rho}-\sigma^{x-\rho}} (1+s^2)^{-1}ds.
	\end{equation}
	And similarly
	\begin{equation} \label{StandardInterval}
	\omega(z,I, \DD) \approx \int_{-\sigma^{x-\rho}}^{\sigma^{1-\rho}-\sigma^{x-\rho}} (1+s^2)^{-1}ds.
	\end{equation}
	We have to distinguish two cases. First consider the case $ x \leq \rho $. Since $ \sigma^{\eta-\rho}-\sigma^{x-\rho} \leq 0 $, estimate (\ref{ExpandedInterval}) becomes 
	\begin{equation*}
	\omega(z,I^\eta_+,\DD) \lesssim \frac{\sigma^{\eta-\rho}}{1+(\sigma^{\eta-\rho}-\sigma^{x-\rho})^2} \lesssim \sigma^{\eta+\rho-2x}.
	\end{equation*} In a similar fashion 
	\begin{equation*}
	\omega(z,I,\DD)^\alpha \gtrsim \frac{\sigma^{\alpha(1-\rho)}}{(1+\sigma^{2(x-\rho)})^\alpha} \gtrsim \sigma^{\alpha(1+\rho-2x)}.
	\end{equation*}The last quantity is always bigger than $ \sigma^{\eta+\rho-2x} $ if $ \alpha = \frac{\eta-\delta}{1-\delta}>0 $.
	
	For the remaining case $ x>\rho $, first we estimate $ (1+s^2)^{-1} $ by $ 1 $
	and we get $\omega(z,I^\eta_+,\DD) \lesssim \sigma^{\eta-1}.$ For the reverse estimate for $ \omega(z,I,\DD) $ we estimate again in the simplest way, because in that case $ [-\sigma^{x-\rho},\sigma^{1-\rho}-\sigma^{x-\rho}] \subseteq [-1,1] $,  and since $ (1+s^2)^{-1} \geq \frac{1}{2} $ on this interval
	\begin{equation*}
	\omega(z,I,\DD)^\alpha \gtrsim  \sigma^{\alpha(1-\eta)} \geq \sigma^{\eta-\rho}.
	\end{equation*} The last inequality is true for all $ 0\leq \rho<\delta $ if $ \alpha = \frac{\eta-\delta}{1-\delta} $.
	
\end{proof}

\begin{prop} \label{CondensersBlowUp}
	Let $ z,z_i\in\DD, \,\,\, i\in\NN, |z|\geq 1/2 $ and suppose that there exist $ \alpha>1,0<\beta<1 $ such that $ (1-|z_i|)^\beta \leq (1-|z|)^\alpha $. Then, 
	\begin{equation*}
	\capacity_\DD\Big( \Delta_1(z), \bigcup_{i=1}^{\infty}I_{z_i}^\beta \Big) \leq C_{\alpha,\beta}\cdot  \capacity_\DD\Big(\Delta_1(z), \bigcup_{i=1}^\infty I_{z_i}    \Big).
	\end{equation*}
\end{prop}

\begin{proof}
	Let us denote by $\phi_z$ the disc automorphism which interchanges $0$ and $z$. By Lemma \ref{HarmonicMeasureBlowup}, there exist constants $ C,\eta>0 $, depending only on $ \alpha, \beta $, such that $ |\phi_z(I^\beta_{z_i})| \leq C \cdot |\phi_z(I_{z_i})|^\eta $. Since $ \phi_z(I_{z_i}) \subseteq \phi_z(I^\beta_{z_i}) $, we get that $\phi_z(I^\beta_{z_i}) \subseteq C \cdot\phi_z(I_{z_i})^\eta $.
	
	In this case we can estimate as follows, for $ N\in \NN $ fixed.
	
	\begin{align*}
	\capacity_\DD\Big( \Delta_1(z), \bigcup_{i=1}^N I^\beta_{z_i} \Big) & = \capacity_\DD \Big( \Delta_1(0), \bigcup_{i=1}^N\phi_z(I^\beta_{z_i})  \Big)  \\
	& \leq \capacity_\DD \Big( \Delta_1(0) , \bigcup_{i=1}^N C\cdot \phi_z(I_{z_i})^\eta \Big) \\
	& \leq C \capacity_\DD\Big( \Delta_1(0), \bigcup_{i=1}^N \phi_z(I_{z_i})  \Big) \\
	& = C \capacity_\DD \Big( \Delta_1(z), \bigcup_{i=1}^N I_{z_i} \Big).
	\end{align*} 
	
	The result follows by letting $ N $ go to infinity.
\end{proof}


\subsection{Hyperbolic geometry in the disc and stability of Carleson boxes under automorphisms of the unit disc}

One obstacle we will have to overcome when dealing with the capacitary condition is the fact that it involves an intrinsically conformally invariant quantity (the condenser capacity) and a geometric object (Carleson box) which is not  defined in terms of hyperbolic geometry of the disc. 

A manifestation of this phenomenon is in the following observation. Suppose we consider  an arc $ I_w \subseteq \TT $ corresponding to a point $ w $ in the unit disc. Then  in general, under a disc automorphism $\phi_w $ 
\[ \phi_z(I_w) \neq I_{\phi_z(w)}. \]

One way to get around this problem is to ask for the point $ z $ to be closer to the origin than $ w $ is. In such a case we can expect a stability of the geometry of Carleson boxes under a disc automorphism.

\begin{lem} \label{CarlesonBoxStability}
	Let $ z\in \DD $ and 
	\[ G = \bigcup_{i} I_{w_i} \]
	an open set. Suppose also that 
	\[  |z| \leq |w_i|, \,\,\, \forall i. \]
	Then, there exists an absolute constant $ \kappa>0, $ such that 
	\[ \frac{1}{\kappa}\cdot \phi_z(G) \subseteq \bigcup_{i}I_{\phi_z(w_i)} \subseteq \kappa \cdot \phi_z(G).\]
\end{lem}
\begin{proof}
	Notice that since an automorphism of the unit disc extends to a homeomorphism on the boundary it suffices to prove the claim when $ G $ is a single interval $ I_w $. 
	
	It suffices to prove the claim when $ |z|\geq 1/2 $. Even more, it is always true that  $ \phi_z(I_w) \cap I_{\phi_z(w)} \neq \emptyset $, therefore our claim will follow if we prove that their lengths are comparable. To this end, consider a $ \zeta \in I_w $ and suppose first that $ |z^*-\zeta| \geq 2\pi (1-|z|) $. Consequently,
	\begin{align*}
	|z^*-w^*| & \geq |z^*-\zeta|-|\zeta-w^*| \\
	& \geq |z^*-\zeta|-\pi(1-|w|) \\
	& \geq |z^*-\zeta|-\pi(1-|z|) \\
	& \geq \pi|z^*-\zeta|.
	\end{align*}
	
	Hence, for $ z,w \in\DD $ and $\zeta\in I_w $ as before, we have that \begin{equation*}
	|1-z\overline{w}| \approx \max \{1-|z|,1-|w|,|z^*-w^*| \} \gtrsim \max \{1-|z|,|z^*-\zeta| \} \approx |1-z\overline{\zeta}| . 
	\end{equation*}
	But this last estimate remains true also in the case $ |z^*-\zeta| \leq 2\pi(1-|z|) $.
	
	As a matter of fact, the converse inequalities follows by a similar consideration, examining the cases $ |z^*-w^*| \geq 2\pi(1-|w|)$ and $ |z^*-w^*| \leq 2\pi(1-|w|) $.
	
	Hence,
	\begin{equation*}
	|\phi_z(I_w)| = \int_{I_w}\frac{1-|z|^2}{|1-\overline{\zeta}z|^2}|d\zeta| \approx \frac{(1-|z|)(1-|w|)}{|1-\overline{w}z|^2} \approx |I_{\phi_z(w)}|.
	\end{equation*}

\end{proof}

\subsection{Stability of condenser capacity under perturbation of plates}

In this point we introduce a tool which proves to be critical for our constructions. We introduce it here because it will come handy in the proof of the next lemmas, but we will use it again in Section \ref{BishopTheorem} as a building block for our interpolating functions. 

First a bit of notation. For an interval $I\subseteq \TT$ we write $S(I)$ for the Carleson box $S(w)$ such that $I_w=I$ and also if $G\subseteq \TT$ is an open set on the circle we use the notation
\[ S(G):= \bigcup_{I\subseteq G} S(I). \]

Let now $G\subseteq \TT$ then there exists an equilibrium measure $\mu_G$ for $G$ (as defined for example in \cite[p.19]{Mashreghi14}) and an associated holomorphic potential defined as 
\[ \varphi_G(z):=\int_{\TT}\log\frac{e}{1-z\overline{\zeta}}d\mu_G(\zeta). \]
This function has some useful properties.
\begin{prop} \cite[Lemma 2.3]{Cascante2012}\label{LogPotential}
 Let $G$ and $\varphi_G$ as before, then the following is true.
 \begin{enumerate}
     \item $|\Im \varphi_G(z)| \leq \frac{\pi c(G)}{2}, \,\,\, \forall z\in \DD $,
     \item $ 0 \leq \Re (\varphi_G) (z) \leq 1 , \,\,\, \forall z \in \DD,$
     \item $|\varphi_G| \leq \frac{\pi}{2} \Re( \varphi_G)$,
     \item $|\varphi_G(z)| \geq \varepsilon, \,\,\, \forall z\in S(G)$,
     \item $\normD{\varphi_G}^2 \leq c_0 c(G)$,
     \item $\varphi_G$ is univalent.
 \end{enumerate}
\end{prop}
The fact that it is univalent comes from the observation that it has a derivative with positive real part.

\begin{lem}\label{StrechingCapacity}
	Suppose that $ (\DD,E,F) $ is a condenser and $ 0<a<b $. Also $ u \in H_1(\DD) \cap C(\overline{\DD}) $ such that $ u\leq a $ in $ E $, $ u \geq b $ in $ F $. Then \begin{equation*}
	\capacity_{\DD}(E,F) \leq \frac{1}{(b-a)^2}\int_{\DD}|\nabla u|^2dA.
	\end{equation*}
	
\end{lem}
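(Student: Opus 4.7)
The plan is to use $u$ to build an admissible competitor for the condenser $(\mathbb{D},E,F)$. First I would normalize and truncate, setting
\begin{equation*}
v(z) := \max\Bigl(0, \min\Bigl(1, \frac{u(z)-a}{b-a}\Bigr)\Bigr).
\end{equation*}
By hypothesis $u\leq a$ on $E$ and $u\geq b$ on $F$, so $v\equiv 0$ on $E$ and $v\equiv 1$ on $F$. Since $v$ is the composition of $u$ with a globally Lipschitz function of Lipschitz constant $1/(b-a)$, we have $v\in W^{1,2}(\mathbb{D})\cap C(\overline{\mathbb{D}})$ and, by the standard chain rule for Sobolev functions, $\nabla v=\frac{1}{b-a}\nabla u$ almost everywhere on $\{a<u<b\}$ and $\nabla v=0$ almost everywhere off that set. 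Consequently
\begin{equation*}
\int_{\mathbb{D}}|\nabla v|^2\,dA\leq \frac{1}{(b-a)^2}\int_{\mathbb{D}}|\nabla u|^2\,dA.
\end{equation*}

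Next I would bridge the gap between the class $W^{1,2}(\mathbb{D})\cap C(\overline{\mathbb{D}})$ and the class of admissible functions used to define $\capacity_\mathbb{D}(E,F)$, which requires uniform Lipschitz continuity on compact subsets of $\mathbb{D}$. The idea is to approximate $v$ in the interior by convolution with a smooth mollifier, obtaining a sequence $v_\varepsilon\in C^\infty(\mathbb{D})$ that is locally Lipschitz, with $v_\varepsilon\to v$ in $W^{1,2}_{\mathrm{loc}}$. Because $v$ takes the constant values $0$ and $1$ on neighbourhoods of $E$ and $F$ respectively when we first slightly shrink $a$ to $a-\delta$ and enlarge $b$ to $b+\delta$ in the preceding step (so that $v$ is locally constant near $E\cup F$), the mollified functions $v_\varepsilon$ still satisfy $v_\varepsilon|_E=0$ and $v_\varepsilon|_F=1$ once $\varepsilon$ is small enough. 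Each $v_\varepsilon$ is therefore admissible for $(\mathbb{D},E,F)$, and by the definition of capacity
\begin{equation*}
\capacity_\mathbb{D}(E,F)\leq \int_{\mathbb{D}}|\nabla v_\varepsilon|^2\,dA\longrightarrow \int_{\mathbb{D}}|\nabla v|^2\,dA\leq \frac{1}{(b-a)^2}\int_{\mathbb{D}}|\nabla u|^2\,dA.
\end{equation*}
Letting $\delta\to 0$ at the end absorbs the auxiliary shrinking of the interval $[a,b]$.

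The only subtle point is the admissibility issue in the second paragraph: one must produce a locally Lipschitz competitor with the exact boundary data $0$ on $E$ and $1$ on $F$. Everything else is routine chain rule for Sobolev functions. One can also bypass this step altogether by invoking the well-known fact (e.g.\ from \cite{Dubinin14}) that the condenser capacity coincides with the infimum of $\int|\nabla w|^2\,dA$ taken over all $w\in W^{1,2}(\mathbb{D})$ with $w=0$ on $E$ and $w=1$ on $F$ in the trace sense, in which case the truncated function $v$ is immediately admissible and the lemma is proved in one line.
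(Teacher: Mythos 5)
Your proof is correct and is essentially the paper's own argument: the paper likewise defines the truncation $g:=\min\{\max\{(u-a)/(b-a),0\},1\}$, asserts it is admissible for $(\mathbb{D},E,F)$, and concludes immediately. Your extra discussion of reconciling $W^{1,2}\cap C(\overline{\mathbb{D}})$ with the locally Lipschitz admissibility class is a reasonable refinement of a point the paper glosses over, but it does not change the approach.
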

\begin{proof}
	Define the function $$ g:=\min\{ \max \{\frac{u-a}{b-a},0\},1 \}. $$
	Then $ g $ is an admissible function for the condenser $ (\DD,E,F) $, hence, 
	\begin{equation*}
	\capacity_{\DD}(E,F) \leq \int_{\DD}|\nabla g|^2 dA \leq \frac{1}{(b-a)^2}\int_{\DD}|\nabla u|^2 dA.
	\end{equation*}
\end{proof}

\begin{lem} \label{Betsakos}
	Suppose that $ 	w\in \DD, d_h(w,0)>2 $ and $ u $ is the equilibrium potential for the condenser $ (\DD, \Delta_1(0), \Delta_1(w)) $. Then $ u(z)\geq 1/2 $ for every $ z $ such that $ d_h(z,w)\leq d_h(z,0) $, where $ d $ is the hyperbolic distance in $ \DD $.
\end{lem}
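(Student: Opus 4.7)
The plan is to exploit the $\mathbb{Z}/2$-symmetry of the condenser. Let $B := \{z \in \mathbb{D} : d(z,0) = d(z,w)\}$ be the hyperbolic bisector of $0$ and $w$, which is a hyperbolic geodesic (a circular arc orthogonal to $\partial \mathbb{D}$), and let $\sigma : \overline{\mathbb{D}} \to \overline{\mathbb{D}}$ denote the anti-holomorphic reflection across $B$. Then $\sigma$ is a hyperbolic isometry that fixes $B$ pointwise and interchanges $0$ with $w$. The hypothesis $d(0,w) > 2$ places each of $0$ and $w$ at hyperbolic distance strictly greater than $1$ from $B$, so $\sigma(\Delta_1(0)) = \Delta_1(w)$ and these two plates lie in opposite open half-planes bounded by $B$.

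I would next show that $u$ inherits this symmetry. Since $\sigma$ is anti-conformal, the Dirichlet integral is invariant under precomposition with $\sigma$; as $\sigma$ interchanges the plates, the function $v(z) := 1 - u(\sigma(z))$ is admissible for the same condenser and has the same Dirichlet energy as $u$. Strict convexity of the Dirichlet integral over the affine space of admissible functions gives uniqueness of the equilibrium potential, hence $v = u$; that is, $u(z) + u(\sigma(z)) = 1$ throughout $\overline{\mathbb{D}}$. Restricting to the fixed locus of $\sigma$ yields $u \equiv 1/2$ on $B$.

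Finally, I would apply the minimum principle on the open set $\Omega := \{z \in \mathbb{D} : d(z,w) < d(z,0)\} \setminus \overline{\Delta_1(w)}$. The function $u$ is harmonic there, with $u = 1/2$ on $B \cap \mathbb{D}$ and $u = 1$ on $\partial \Delta_1(w)$. The hard part is the free boundary behaviour on $\partial \mathbb{D}$: here the variational minimizer satisfies the natural Neumann condition $\partial u/\partial\nu = 0$, which permits a Schwarz-type reflection extension $\tilde u(z) := u(1/\overline{z})$ across $\partial \mathbb{D}$. The extended function is harmonic on a neighbourhood of $\partial \mathbb{D}$ (the reflected plates being disjoint from a thin enough neighbourhood), and $B$, being orthogonal to $\partial \mathbb{D}$, is preserved by this reflection. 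The standard minimum principle then applies on the enlarged domain and yields $\tilde u \geq 1/2$ on $\Omega$. Combined with $u \equiv 1$ on $\Delta_1(w)$, this gives $u(z) \geq 1/2$ for every $z$ with $d(z,w) \leq d(z,0)$, as claimed.
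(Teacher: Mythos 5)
Your proof is correct, and its first half coincides with the paper's: the paper normalizes by a M\"obius map $\varphi$ sending $0,w$ to $\mp r$, so that your reflection $\sigma$ across the hyperbolic bisector becomes $z\mapsto -\overline{z}$, and the identity $u+u\circ\sigma\equiv 1$ (hence $u\equiv 1/2$ on the bisector) is exactly the paper's symmetry step. Where you diverge is the passage from ``$u=1/2$ on the bisector'' to ``$u\geq 1/2$ on the half containing $w$''. You invoke the minimum principle, which forces you to confront the free boundary on $\partial\mathbb{D}$ via the Neumann condition and a Schwarz-type reflection $\tilde u(z)=u(1/\overline{z})$ --- correctly identified as the delicate point, and your treatment of it is legitimate, though it tacitly uses that the variational minimizer is regular enough up to $\partial\mathbb{D}$ (away from the plates) for the reflected function to be harmonic. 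The paper instead stays entirely variational: if $v(z_0)<1/2$ at some point $z_0$ with $\Re z_0>0$, it replaces $v$ on the right half-disc by $\max\{c,v\}$ with $c=(1/2+v(z_0))/2$, obtaining an admissible competitor with strictly smaller Dirichlet integral, contradicting minimality. This truncation trick buys you exactly what you flagged as hard: it needs no boundary regularity, no harmonicity, and no reflection across $\partial\mathbb{D}$, only the defining minimality of $u$. Your route, on the other hand, is the more standard PDE argument and gives the slightly stronger structural information that $u$ solves the mixed Dirichlet--Neumann problem and extends harmonically across the circle.
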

\begin{proof}
	First consider $ \phi $ an automorphism of the unit disc such that $ \phi(0)=-r, \phi(w)=r, r>0 $. By conformal invariance, $ v:=u\circ\phi $ is the equilibrium potential for the condenser $ (\DD, \Delta_1(-r), \Delta_1(r)) $. Also by symmetry, $ v(-x+iy)=1-v(x+iy) $, therefore $ v(iy)=1/2 $. Suppose now that at some point $ z_0\in\DD, \Re(z_0)>0 $, $ v(z_0)<1/2 $. In that case the function $ h $ defined by
	\begin{equation*}
	h(z):=
	\begin{cases}
	v(z), \,\,\, \text{if} \,\,\, \Re(z)\leq 0, \\
	\max\{\frac{1/2+v(z_0)}{2},v(z)\}, \,\,\, \text{if} \,\,\, \Re(z) \geq 0,
	\end{cases}
	\end{equation*} is admissible for the condenser and has strictly smaller Dirichlet integral, which contradicts the fact that $ v $ is the minimizer.
\end{proof}

We can now prove the following.

\begin{prop}\label{ComparisonCapacities} Suppose that $ z\in \DD $ and $ z_1,\dots,z_N\in\DD $, such that $ |z_i|\geq |z| $ and $d_h(z,z_i) \geq 3 $. Then, 
	\begin{equation*}
	\capacity_{\DD} \Big( \Delta_1(z), \bigcup_{i=1}^N S(z_i) \Big) \approx \capacity_{\DD} \Big( \Delta_1(z), \bigcup_{i=1}^N \Delta_1(z_i) \Big) \approx \capacity_{\DD} \Big( \Delta_1(z), \bigcup_{i=1}^N I_{z_i} \Big).
	\end{equation*} Where the implied constants are absolute.
\end{prop}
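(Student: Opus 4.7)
The strategy is to establish the three-way equivalence by handling two comparisons in sequence: first prove $\capacity_\mathbb{D}(\Delta_1(z),\bigcup I_{z_i})\sim \capacity_\mathbb{D}(\Delta_1(z),\bigcup S(z_i))$ directly, and then deduce $\capacity_\mathbb{D}(\Delta_1(z),\bigcup \Delta_1(z_i))\sim \capacity_\mathbb{D}(\Delta_1(z),\bigcup S(z_i))$ by combining an inclusion of $\Delta_1(z_i)$ into an expanded Carleson box, the first comparison, and Lemma~\ref{BlowUp}. By conformal invariance of condenser capacity, one may first apply $\varphi_z$ to reduce to the case $z=0$; the depth hypothesis $1-|z_i|\le (1-|z|)/2$ then ensures (in the non-degenerate case) a uniform lower bound on $|\varphi_z(z_i)|$ and that the three transported targets sit at the same scale around $\varphi_z(z_i)$, as quantified by the harmonic-measure estimate $\omega(z,I_w,\mathbb{D})\sim |I_{\varphi_z(w)}|$ just established.

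For the first comparison, the inclusion $I_{z_i}\subset \overline{S(z_i)}$ immediately gives $\capacity_\mathbb{D}(\Delta_1(z),\bigcup I_{z_i})\le \capacity_\mathbb{D}(\Delta_1(z),\bigcup S(z_i))$ since any admissible function for the larger-target condenser extends continuously to $\overline{\mathbb{D}}$ and restricts to an admissible function for the smaller-target one. For the reverse inequality, let $u$ be the equilibrium potential of $(\mathbb{D},\Delta_1(z),\bigcup I_{z_i})$; I would prove a pointwise bound $u\ge c_0$ on $\bigcup S(z_i)$ for some absolute constant $c_0>0$, after which Lemma~\ref{StrechingCapacity} applied with $a=0$ and $b=c_0$ yields
\[
  \capacity_\mathbb{D}\bigl(\Delta_1(z),\bigcup S(z_i)\bigr)\le \frac{1}{c_0^2}\int_\mathbb{D}|\nabla u|^2\,dA=\frac{1}{c_0^2}\,\capacity_\mathbb{D}\bigl(\Delta_1(z),\bigcup I_{z_i}\bigr).
\]
The pointwise bound is obtained by dominating $u$ from below by the single-plate equilibrium potential $u_i$ of $(\mathbb{D},\Delta_1(z),I_{z_i})$ (adding sources to the level-one plate only raises the equilibrium potential, by a maximum-principle comparison) and then arguing in the spirit of Lemma~\ref{Betsakos} — after a Möbius change of coordinates that sends $\Delta_1(z)$ and $I_{z_i}$ into symmetric position with respect to a separating geodesic $\gamma_i$ — that $u_i$ is bounded below by an absolute constant on the $I_{z_i}$-side of $\gamma_i$. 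The depth condition on $z_i$ is precisely what ensures that the whole Carleson box $S(z_i)$ lies on that side.

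The second comparison then follows: the inclusion $\Delta_1(z_i)\subset S^\eta(z_i)$, valid for some $\eta\in(0,1)$ depending only on the hypothesis, gives $\capacity_\mathbb{D}(\Delta_1(z),\bigcup \Delta_1(z_i))\le \capacity_\mathbb{D}(\Delta_1(z),\bigcup S^\eta(z_i))$, and applying the first comparison to the expanded boxes together with Lemma~\ref{BlowUp} on the corresponding boundary arcs $\varphi_z(I_{z_i}^{(\eta)})$ shows that this is $\lesssim \capacity_\mathbb{D}(\Delta_1(z),\bigcup S(z_i))$. The opposite inequality is obtained via the same equilibrium-potential scheme, applied to the potential $u_\Delta$ of $(\mathbb{D},\Delta_1(z),\bigcup \Delta_1(z_i))$ together with Lemma~\ref{Betsakos} directly; the extra subtlety, and the principal remaining obstacle, is that $S(z_i)$ touches $\mathbb{T}$ whereas $\Delta_1(z_i)$ does not, so the lower bound on $u_\Delta$ on the part of $S(z_i)$ approaching $\mathbb{T}$ must be proved by extending $u_\Delta$ across $\mathbb{T}$ by reflection (using the Neumann condition that $u_\Delta$ satisfies on the free portion of $\mathbb{T}$) and then applying Lemma~\ref{Betsakos} in the doubled configuration.
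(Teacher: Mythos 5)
Your overall architecture — reduce to $z=0$ by conformal invariance together with the harmonic-measure identification of $\varphi_z(I_{z_j})$ with $I_{\varphi_z(z_j)}$, trivial inclusions in one direction, equilibrium-potential lower bounds fed into Lemma \ref{StrechingCapacity} in the other, and domination of the multi-plate potential by single-plate potentials — is the same as the paper's, and several pieces are sound. In particular, arcs $\le$ boxes is immediate, and your direct route to boxes $\lesssim$ discs via Lemma \ref{Betsakos} works because $S(z_i)$ is (up to normalization) contained in the half-space $\{w: d(w,z_i)\le d(w,0)\}$ on which the single-plate potential is $\ge 1/2$; the reflection across $\mathbb{T}$ you propose there is unnecessary, since Lemma \ref{Betsakos} is already a statement about the Neumann problem in $\mathbb{D}$ and its conclusion holds at points arbitrarily close to $\mathbb{T}$.

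The genuine gap is in the hard direction of your first comparison, $\capacity_{\mathbb{D}}(\Delta_1(z),\bigcup S(z_i))\lesssim\capacity_{\mathbb{D}}(\Delta_1(z),\bigcup I_{z_i})$ — on which your ``discs $\lesssim$ boxes'' direction also depends, since it routes through the expanded boxes and back to arcs. You need the equilibrium potential of the arc-plate condenser to be bounded below by an absolute constant on $\bigcup S(z_i)$, and you propose to obtain the single-plate bound ``in the spirit of Lemma \ref{Betsakos}'' after a M\"obius map placing $\Delta_1(0)$ and $I_{z_i}$ in symmetric position. But the proof of Lemma \ref{Betsakos} hinges on the two plates being congruent hyperbolic discs: that is what makes the configuration mirror-symmetric and forces $v\equiv 1/2$ on the separating geodesic, which is the anchor for the truncation argument. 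A hyperbolic disc and a boundary arc are never M\"obius-equivalent to such a symmetric pair, so there is no geodesic on which the potential is pinned at $1/2$, and the argument collapses. Crude substitutes also fail: bounding $u$ below by the harmonic measure of $I_{z_i}$ in $\mathbb{D}\setminus\overline{\Delta_1(0)}$ loses an additive constant of the order of the harmonic measure of $\partial\Delta_1(0)$, which is not small compared with $\omega(w,I_{z_i},\mathbb{D})$ at the top of the box. The paper closes exactly this step by a different device: it takes the logarithmic potential $f_E$ of the equilibrium measure of $E=\bigcup_i \varphi_z(I_{z_i})$, for which the estimates $f_E\ge c$ on $S(E)$, $f_E\le C_0\,C(E)$ on $\Delta_1(0)$ and $\mathcal{D}(f_E)\sim C(E)$ are known (\cite[Lemma 2.3]{Cascante2012}), and feeds $f_E$ into Lemma \ref{StrechingCapacity} to produce an admissible function for the box condenser of energy $\lesssim C(E)=\capacity_{\mathbb{D}}(\Delta_1(0),\bigcup\varphi_z(I_{z_i}))$. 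Without this (or an equivalent potential-theoretic input giving a uniform lower bound over the whole Carleson box of a boundary arc), your chain only yields arcs $\le$ boxes $\lesssim$ discs, not the full three-way equivalence.
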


Before going into the proof, let us remark that the assumption $d_h(z,z_i)>3$ is indeed necessary (although the particular constant is not essential) because otherwise even for $N=1$ it might happen that the capacity of the first and second condenser goes to infinity while the capacity of the third one remains bounded. Since we are interested in comparability of the capacities only when they tend to zero, this is not a major issue for us.

\begin{proof} 
	Trivially the first capacity is bigger than the third one. 
	
	To prove the other estimates we first examine the case $ z = 0 $.
	Consider the set $ E=\bigcup_{j=1}^N I_{z_j} $. Consider the equilibrium potential $\varphi_E$ and apply to it Lemma \ref{StrechingCapacity} in order to get
	\begin{align*}
	\capacity_{\DD} \Big( \Delta_1(0), \bigcup_{i=1}^N S(z_i) \Big) & \leq \frac{1}{(c-C_0 c(E))^2} \int_{\DD}|\nabla \varphi_E(z)|^2 dA(z) \\ & \lesssim c(E) \\ & = \capacity_{\DD} \Big( \Delta_1(0), \bigcup_{i=1}^N I_{z_i} \Big).
	\end{align*}
	
	Without loss of generality in the above estimate we assumed that $ c(E) $ is sufficiently small, otherwise the estimate is trivial.  
	
	Before we proceed let us note that by Dirichlet's principle \cite{Marshall94} the equilibrium potential $ u $ for the condenser $ \Big( \Delta_1(z), \bigcup_{i=1}^N \Delta_1(z_i) \Big) $ is harmonic in the domain $ \Omega :=  \DD\setminus \{\overline{\Delta_1}(z_1),\overline{\Delta_1}(z_2) \dots, \overline{\Delta_1}(z_N) \}  $.
	
	For a fixed $ i\in \{1,\dots,N \} $ let $  u_i $ be the equilibrium potential for the condenser $\capacity_{\DD} \Big( \Delta_1(0), \Delta_1(z_i) \Big) $. Then by the maximum principle 
	\[ u\geq u_i \,\,\, \text{on} \,\,\, \Omega. \]

	By appealing to Lemma \ref{Betsakos}, we get $u \geq u_i\geq 1/2 $ on $ I_{z_i} $. Hence, \begin{equation*}  \capacity_{\DD} \Big( \Delta_1(0), \bigcup_{i=1}^N I_{z_i} \Big) \leq 4 \capacity_{\DD} \Big( \Delta_1(0), \bigcup_{i=1}^N \Delta_1(z_i) \Big).
	\end{equation*}
	
	Suppose now that $ z $ is not necessarily zero. By the stability Lemma \ref{CarlesonBoxStability} we can find $\kappa > 0 $ such that 
	\[ \phi_z(\bigcup_{i=1}^N S(z_i)) \subseteq  \bigcup_{i=1}^N \kappa\cdot S(\phi_z(z_i)) \]
	Applying this we get
	\begin{align*}
	\capacity_\DD(\Delta_1(z), \bigcup_{i=1}^N S(z_i)) 
	& \leq \capacity_\DD(\Delta_1(0),  \bigcup_{i=1}^N \kappa\cdot S(\phi_{z}(z_i))) \\
	& \lesssim \capacity_\DD(\Delta_1(0), \bigcup_{i=1}^N \kappa\cdot I_{\phi_{z}(z_i)}) \\
	& \lesssim \capacity_\DD(\Delta_1(0), \bigcup_{i=1}^N \phi_{z}(I_{z_i}) ) \\ 
	& = \capacity_{\DD} \Big( \Delta_1(z), \bigcup_{i=1}^N I_{z_i} \Big).
	\end{align*}
	The remaining estimates
	\begin{equation*}
	\capacity_{\DD} \Big( \Delta_1(z), \bigcup_{i=1}^N I_{z_i} \Big) \lesssim
	\capacity_{\DD} \Big( \Delta_1(z), \bigcup_{i=1}^N \Delta_1(z_i) \Big) \lesssim 
	\capacity_\DD(\Delta_1(z), \bigcup_{i=1}^N S(z_i)).
	\end{equation*} 
	
	can be handled in much the same way, therefore the proof will be omitted.
\end{proof}

The following proposition allows us to express the condenser capacity appearing in Theorem \ref{Theorem A} in terms of logarithmic capacity.

\begin{prop}\label{LogarithmicCondensers}
	Suppose that $ z, z_1, \dots z_N \in \mathbb{D} $ such that $1-|z_i| \leq (1-|z|)/2$. Then 
	\begin{equation*}
	c \Big( \bigcup_{i=1}^N I_{\phi_{z}(z_i)} \Big)  \approx \capacity_{\DD} \Big(\Delta_1(z), \bigcup_{i=1}^N I_{z_i} \Big),
	\end{equation*}where the implied constants are absolute.
\end{prop}

\begin{proof}
	
	We have that
	\begin{align*} 
	\capacity_\DD \Big( \Delta_1(z), \bigcup_{i=1}^N I_{z_i} \Big)  & = 	\capacity_\DD \Big( \Delta_1(0), \bigcup_{i=1}^N \phi_{z}(I_{z_i}) \Big) \\ 
	&\approx 	\capacity_\DD \Big( \Delta_1(0), \bigcup_{i=1}^N I_{\phi_{z}(z_i)} \Big) \\
	&	= c\Big(  \bigcup_{i=1}^N I_{\phi_{z}(z_i)} \Big).
	\end{align*} The second estimate is justified with an argument identical to the one in the proof of Proposition \ref{ComparisonCapacities}. 
\end{proof}


\subsection{Strong separation and condensers capacity}
At this point let us introduce the following notation. If $\cZ$ is a sequence and $\gamma<1$ we write 
\[ \mathcal{V}_\gamma(z):= \{ z_j \in \cZ : S^\gamma(z)\cap S^\gamma(z_j) \neq \emptyset, |z_j| \geq |z| \}. \] This is a slightly bigger neighborhood of points than $S^\gamma(z)\cap \cZ$ but nonetheless triangle inequality gives 
\[ \mathcal{V}_\gamma(z) \subseteq S^\gamma(2 I_z) \]
As a consequence we get that the capacitary condition does not change substantially if we ask a bit more. 
\begin{lem} \label{VorSdoesnotmatter}
 If a weakly separated sequence satisfies the capacitary condition with constants $K, 1>\gamma>0$ then for $1>\eta>\gamma $ it satisfies the condition
\begin{equation*}
\capacity_\DD\Big( \Delta_1(z_i), \bigcup_{z_j\in\mathcal{V}_{\eta}(z_i) } I_{z_j} \Big) \leq \frac{K}{d(z_i)},
\end{equation*} with possibly a finite number of exceptions. 
 \end{lem}
\begin{proof}
As already noted, excluding possibly a finite number of points close to the origin 
\[ \mathcal{V}_\eta(z_i) \subseteq S(2I^\eta_{z_i}) \subseteq S^\gamma(z_i).\]
\end{proof}
The following is a simple lemma about the geometry of weakly separated sequences. A proof of it (actually a stronger statement) can be found \cite{Marshall94}. We provide a proof of the part that we are going to use for completeness.

\begin{lem}\label{SeparationLemma}
	Suppose that $ \{z_i\} $ is a weakly separated sequence, with separation constant $\varepsilon$. Then if $1 > \gamma > 1-\varepsilon$, with finite exceptions,  if $z_j \in \mathcal{V}_\gamma(z_i)$ 
	\begin{equation*}
	(1-|z_j|)^{\gamma}\leq \frac{1-|z_i|}{2}.
	\end{equation*}

\end{lem}
\begin{proof} The proof of this lemma is nothing more than the simple geometric fact that for $z\in \DD$ and $c_0>0$ the region 
\[ \{w \in \DD : |w| \geq |z|,  2(1-|w|)^\gamma \leq 1-|z|, \,\,\, |z^*-w^*| \leq c_0 (1-|z|)^\gamma  \} \]
is contained eventually (for $z$ close to the boundary) in any hyperbolic disc with center $z$ and radius  $(d_h(z,0)+1)/\varepsilon$, since $1/\varepsilon > 1/(1-\gamma).$
\end{proof}

\begin{prop} \label{NecessaryCondition} Suppose that $ \{ z_i \} $ is a $ K-$ strongly separated sequence in the unit disc. If $ \gamma<1 $ as in Lemma \ref{SeparationLemma}, there exists $ C>0 $ depending only on $K$ such that,
\begin{equation*}
\capacity_\DD\Big( \Delta_1(z_i), \bigcup_{z_j\in\mathcal{V}_{\gamma}(z_i) } I_{z_j} \Big) \leq \frac{C_{K,\gamma}}{d(z_i)},
\end{equation*}
for all but finitely many $z_i$.
\end{prop}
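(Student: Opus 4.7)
The strategy is to exhibit an admissible function for the condenser with small Dirichlet integral, built from the Dirichlet-space interpolant that strong separation provides. Thanks to the $(\eta,\beta)$-normalization we may assume $|z_i|\geq 1/2$ and $1-|z_j|\leq(1-|z_i|)/2$ for every $z_j\in\mathcal{V}_\gamma(z_i)$, so Proposition~\ref{ComparisonCapacities} reduces the claim to proving
\[
\capacity_\mathbb{D}\Bigl(\Delta_1(z_i),\bigcup_{z_j\in\mathcal{V}_\gamma(z_i)}\Delta_1(z_j)\Bigr)\leq\frac{C}{d(z_i)}.
\]
Strong separation provides $f_i\in\mathcal{D}$ with $f_i(z_j)=\delta_{ij}$ and $\normD{f_i}^2\leq K/d(z_i)$; in particular $\int_\mathbb{D}|f_i'|^2\,dA\leq K/d(z_i)$.

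The technical heart of the argument is to upgrade this pointwise interpolation data to uniform $L^\infty$ control on the full hyperbolic discs $\Delta_1(z_i)$ and $\Delta_1(z_j)$. For any $z\in\mathbb{D}$ with $|z|\geq 1/2$ and any $w\in\Delta_1(z)$, I integrate $|f_i'|$ along the hyperbolic geodesic from $z$ to $w$, whose Euclidean length is $\lesssim 1-|z|$, and apply the subharmonic mean value inequality $|f_i'(\zeta)|^2\lesssim(1-|z|)^{-2}\int_{\Delta_2(z)}|f_i'|^2\,dA$ valid uniformly for $\zeta\in\Delta_1(z)$, to obtain
\[
|f_i(w)-f_i(z)|^2\lesssim\int_{\Delta_2(z)}|f_i'|^2\,dA\leq\frac{K}{d(z_i)}.
\]
The normalization permits me to assume $d(z_i)$ is as large as desired with respect to $K$, after discarding finitely many initial points of the sequence, so this yields $|1-f_i|<1/4$ on $\Delta_1(z_i)$ and $|f_i|<1/4$, hence $|1-f_i|>3/4$, on $\bigcup_{z_j\in\mathcal{V}_\gamma(z_i)}\Delta_1(z_j)$.

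With this pointwise information in hand, take $u:=\min(|1-f_i|^2,1)$ as test function; then $u\leq 1/16$ on $\Delta_1(z_i)$ and $u\geq 9/16$ on $\bigcup\Delta_1(z_j)$. Since $1-f_i$ is analytic, $|\nabla|1-f_i|^2|=2|1-f_i||f_i'|$ wherever $|1-f_i|^2\leq 1$ and $|\nabla u|$ vanishes where the truncation is active, so
\[
\int_\mathbb{D}|\nabla u|^2\,dA\leq 4\int_{\{|1-f_i|\leq 1\}}|f_i'|^2\,dA\leq\frac{4K}{d(z_i)}.
\]
Lemma~\ref{StrechingCapacity} applied with $a=1/16$ and $b=9/16$ yields $\capacity_\mathbb{D}\bigl(\Delta_1(z_i),\bigcup\Delta_1(z_j)\bigr)\leq 16K/d(z_i)$, and a final invocation of Proposition~\ref{ComparisonCapacities} returns the corresponding bound for the original condenser with boundary-arc plate.

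The main obstacle I anticipate is establishing the uniform control of $f_i$ over the discs $\Delta_1(z_j)$ from its values only at the centres $z_j$; the mean-value and path-integration argument outlined above handles this cleanly, but one must carefully track the absolute constant relating $K$ and $d(z_i)$ to know how many initial points to remove, and verify that $u$ satisfies the regularity hypotheses of Lemma~\ref{StrechingCapacity}, which can be done by a routine approximation $f_i\mapsto f_i(\rho\,\cdot)$ with $\rho\uparrow 1$.
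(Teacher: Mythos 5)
Your proposal is correct and follows essentially the same route as the paper: take the interpolating function $f_i$ furnished by strong separation, upgrade its values at the centres $z_j$ to uniform bounds on the hyperbolic discs via the standard oscillation estimate for Dirichlet functions, feed the resulting test function into Lemma~\ref{StrechingCapacity}, and pass between the $\Delta_1(z_j)$ and $I_{z_j}$ plates with Proposition~\ref{ComparisonCapacities} after normalizing via Lemma~\ref{SeparationLemma}. The only differences are cosmetic (you use $\min(|1-f_i|^2,1)$ with fixed thresholds where the paper uses $f_i$ with thresholds $C_0/\sqrt{d(z_i)}$, and you prove the local oscillation bound that the paper merely cites).
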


\begin{proof}
For a fixed point $ z_i $ then there exists a function $ f_i\in\mathcal{D} $, such that $ f_i(z_i)=1, f_i(z_j)=0$ for $ i\neq j $ and $ \normD{f_i}^2\leq K/d(z_i) $. By the standard oscillation estimate for Dirichlet functions $ |f(z)-f(w)|\leq   \normD{f} C_0 \sqrt{d_h(z,w)} $, we have that 
\begin{equation*}
|f(w)-1|\leq c_0/\sqrt{d_h(z_i,0)}, \,\,  w\in \Delta_1(z_i),
\end{equation*}and
\begin{equation*}
|f(w)|\leq c_0/\sqrt{d_h(z_i,0)}, \,\,  w\in \Delta_1(z_j), \,\, j\neq i. 
\end{equation*}
Therefore by Lemma \ref{StrechingCapacity} applied to $f_i$
\begin{equation*}
\capacity_\DD(\Delta_1(z_i),\bigcup_{j\neq i} \Delta_1(z_j)) \leq \frac{1}{(1-2C_0/\sqrt{d(z_i)})^2}\int_\DD|f_i'|^2dA .
\end{equation*}

Which gives the estimate 
\begin{equation*}
	\capacity_\DD\Big( \Delta_1(z_i), \bigcup_{j\neq i}\Delta_1(z_j) \Big) \leq 2K/d(z_i),
\end{equation*}for all but finite many $ z_i $.

Now notice that if again we exclude from the sequence a finite number of points, because of Lemma \ref{SeparationLemma}, it is true that $ z_j\in\mathcal{V}_{\gamma}(z_i)  $ implies that $ (1-|z_j|)\leq (1-|z_i|)/2  $. And therefore the result follows from Lemma \ref{ComparisonCapacities}.

\end{proof}


\section{Constructing Interpolating building blocks}
As we have already mentioned our interpolation theorems are constructive in the sense that interpolating functions are constructed  by pasting together functions that behave essentially as ``bump" functions. If one requires these bump functions to be holomorphic is clear that they cannot vanish in any non discrete set, therefore our goal is to make them very small outside a certain region, in a sense to be made precise . Here we will preset two complementary constructions. We start with the first one due to B\o e \cite[Lemma 4.1]{Boe}.

\begin{lem}[B\o e \cite{Boe}]
	Suppose that $ z\in\DD $ and $ \alpha<1 $. Then there exists a function $ f=f_{z,\alpha} $, such that
	\begin{enumerate}
	    \item $ f(z)=1 $,
	    \item $ \normD{f}^2\leq C_\alpha/d(z)$,
	    \item $\norm f\norm_\infty\leq C_\alpha $,
	    \item $ |f(w)|\leq C_\alpha e^{-6d(z_i)} $ and,
	    \item  $|f'(w)|\leq C_\alpha e^{-6d(z_i)} $ for all $ w\not\in S^\alpha(z) $.
	\end{enumerate}
\end{lem}

We will refer to the function $ f_{z,\alpha} $ as the B\o e's function associated to $ z $ and $ S^\alpha(z) $.

The second construction is essentially due to Bishop \cite[Lemma 2.13]{Bishop94}. Here we give a construction which is based on holomorphic potentials.  Bishop's original idea was to use conformal mapping and extremal distance techniques based on a result of Marshall and Garnett \cite[Theorem 4.1]{MarshallGarnett}. We shall do a very similar construction but based on logarithmic potentials instead. In many ways the two constructions are equivalent but our has some slight advantages. First, using the machinery developed in Section \ref{CapacityCondensers} we can prove a conformal invariant version of the construction and secondly we are able to control the dependence of our constants on the parameters which is crucial for proving the quantitative estimate that Theorem \ref{Theorem C} requires.

We start with a simple lemma for harmonic measure.

\begin{lem}\label{Harmonic}
Let $ I $ be a closed arc in $ \TT $. For $0<\eta<1$ we have the following elementary estimate 
\begin{equation*}
\omega(z,\TT\setminus I^\eta,\DD) = \int_{\TT\setminus I^\eta}\frac{1-|z|^2}{|\zeta-z|^2}|d\zeta| \leq C_\eta \cdot c(I),
\end{equation*} for $ z\in S(I) $.
\end{lem}

\begin{proof}
Without loss of generality $ S(I)\subseteq \{\zeta=re^{i\theta}\in\DD: 0\leq\theta \leq |I| , 1-r\leq |I| \}.$ In this case we have 

\begin{equation*}
\omega(re^{it},\TT\setminus I^\eta,\DD)  = \frac{1}{2\pi}\int_{|I|^\eta}^{2\pi}\frac{1-r^2}{|e^{is}-re^{it}|^2}ds 
\leq C \int_{|I|^\eta}^{2\pi}\frac{1-r}{|s-t|^2}ds. \end{equation*}
This last integral can be seen to be of the right magnitude 
\[\int_{|I|^\eta}^{2\pi}\frac{1-r}{|s-t|^2}ds \leq C \frac{1-r}{|I|^\eta-t}   \leq C \frac{|I|}{|I|^\eta-|I|} 
\leq C |I|^{1-\eta}   \leq C c(I). \]

\end{proof}

The following lemma is essentially the conformal invariant version of \cite[Lemma 2.13]{Bishop94}.

\begin{lem} \label{BishopFunction}
Let $ z, z_1,z_2,\dots z_N \in\DD$, such that $ 1-|z_i|\leq (1-|z|)/2 $, and 
\begin{equation*}
\capacity_\DD\Big( \Delta_1(z), \bigcup_{i=1}^NI_{z_i} \Big) \leq K/d(z).
\end{equation*}Where $ K\leq K_0 $. Set $F:=\cup_{i=1}^N I_{z_i}$. Then there exists a constant $ C $, depending only on $ K_0 $, and a function $ g_z\in\mathcal{D} $ such that 

\begin{enumerate}
\item $ g_z(z)=1 $,
\item $ \int_\DD|g_z'|^2dA \leq \frac{C}{d(z)} $,
\item $ \norm g_z\norm_\infty \leq C $
\item $ |g_z(w)| \leq Ce^{-6d(z)} $, $ w\in S(F) $,
\item $ \int_{S(F)}|g_z'|^2dA \leq C e^{-6d(z_i)} $.
\end{enumerate}
\end{lem}

\begin{proof} 
Throughout the proof $ C $ denotes a positive constant depending only on $ K_0 $. If a constant depends on some other parameter we will denote it by an index.

Fix some arbitrary $ \eta<1 $, and apply to our assumption first Proposition \ref{LogarithmicCondensers} and then Lemma \ref{BlowUp} with parameter $\eta$ to arrive at  \[ c(\bigcup_{i=1}^\infty\phi_{z}(I_{z_i})^\eta) \leq \frac{C}{d(z)} .\] 
Set $ E:=\bigcup_{i=1}^N \phi_{z}(I_{z_i})^\eta $. 
Let also $\varphi_E$ the holomorphic equilibrium potential associated to $E$ and $\psi_E:=1-(1-c(E))\varphi_E.$
Our building block will be the function 
\[ f_E:= e^{-\frac{A}{\psi_E}}, \]
where $A>0$ is a constant to be specified.  By the properties of holomorphic potentials, it is clear that 
\[ \psi_E(z) \in [c(E),1]\times  [- \frac{\pi c(E)}{2}, \frac{\pi c(E)}{2}] , \,\,\, \forall z \in \DD. \]
At this point some elementary euclidean geometry on the image of $\psi_E$ gives
\[  |\psi_E| \leq \frac{\pi}{2}\Re \psi_E.\] 
This is the crucial estimate that will allow us to derive the desired properties of $f_E.$

Clearly $ f_E $ is bounded by $1$ because the real part of the exponent is positive.

The Dirichlet integral of $f_E$ can  be estimated as follows 
\begin{align*}
\int_\DD|f_E'|^2dxdy &= A^2 \int_{\DD}\Big| \frac{\psi_E'}{\psi_E^2} \Big|^2 e^{-2A\Re\psi_E/|\psi_E|^2}dA \\ 
& \leq A^2 \int_\DD \frac { e^{-\pi A / |\psi_E|}} {|\psi_E|^4 } |\psi'_E|^2dA\\
& \leq \Big( \frac{4}{\pi e} \Big)^4 \frac{1}{A^2} \int_\DD |\psi'_E|^2 dA \\
& \leq c_0 \frac{c(E)}{A^2}.
\end{align*}
With $c_0$ absolute positive constant.

On the other hand the value of $f_E$ at the origin remains bounded below.

\begin{equation*}
|f_E(0)| = e^{-\frac{A}{1-(1-c(E))c(E)}} \geq e^{-\frac{4A}{3}}.
\end{equation*}

Suppose now that $ w\in \bigcup_{i=1}^N S(\phi_{z}(I_{z_i})) $. Let $ J_i= \phi_{z}(I_{z_i}) $. Then $ w\in S(J_i) $ for some $ i $, recalling Lemma, \ref{Harmonic}
\begin{align*}
\Re\psi_E(w) &= \int_\TT\frac{1-|w|^2}{|\zeta-w|^2}\Re\psi_E(\zeta)|d\zeta| \\  &\leq \int_{\TT\setminus J_i^\eta} \frac{1-|w|^2}{|\zeta-w|^2}|d\zeta| + c(E) |J_i^\eta|  \\
&= \omega(w,\TT\setminus J_i^\eta,\DD) + c(E) |J_i^\eta|\\ 
&\leq c_0  c(J_i) \\
&\leq c_0  c(E)  \\
&\leq  \frac{C}{d(z)}.
\end{align*}

Hence, $ |f_E(w)| \leq  e^{-\frac{A d(z)}{2 C}} $.

Finally, using the conformality of logarithmic potentials, 
\begin{align*}
\int_{\bigcup_{i=1}^NJ_i}|f_E'|^2dA &= A^2\int_{\bigcup_{i=1}^NJ_i}\frac{|\psi_E'|^2}{|\psi_E|^4}e^{-2A\Re\psi_E/|\psi_E|^2}dA(z) \\
&\leq A^2\int_{\bigcup_{i=1}^N\psi_E(J_i)}\frac{1}{|z|^4}e^{-2A\Re z/|z|^2}dA(z) \\
&\leq A^2
 \int_{0}^{c(E)}\int_{c(E)}^{Cc(E)}\frac{e^{-A/x}}{x^4}dxdy 
\\ 
& \leq C_A e^{-\frac{Ad(z)}{C}} .
\end{align*} 
 Set $ A=12C $. 
 
 We claim that  $ g_z=f_E\circ\phi_{z}/f_E(0) $ satisfies the required conditions. Properties (1)-(3) and (5) are invariant under M\"obius transformations, we get property (4) after an application of Lemma \ref{CarlesonBoxStability}.
\end{proof}

\section{Onto Interpolation in \texorpdfstring{ $ H_1(\DD) $}{H1(D)}}
\label{SobolevInterpolation}
The idea of the proof is to interpolate every value separately with  functions in $H_1(\DD)$ that have disjoint supports. The simple minded idea to take functions constant on $ \Delta_1(z_i)$ that vanish outside a bigger hyporbolic disc does not work, so we have to construct the disjoint regions in a slightly more sophisticated way.

After a series of elementary lemmas we will proceed to the proof. 

Let  $  \gamma $ as in Lemma \ref{SeparationLemma}. Then let us define the regions $ S_i $ associated to the sequence 
 
\begin{equation*}
S_i := S^{\gamma}(z_i) \setminus \bigcup_{z_j\in \mathcal{V}_{\gamma}(z_i)} S^{\gamma}(z_j).
\end{equation*}

\begin{lem}\label{Regions}
The regions $ S_i $ are pairwise disjoint and, for all but finitely many $ z_i $, we that $ \Delta_1(z_i) \subseteq S_i $ and $ \Delta_1(z_j) \subseteq \DD \setminus S_i  $ for all $ j\neq i $.
\end{lem}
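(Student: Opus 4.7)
The plan is to verify the three assertions (pairwise disjointness of the $S_i$, the inclusion $\Delta_1(z_i) \subseteq S_i$, and the complementary disjointness $\Delta_1(z_j) \cap S_i = \emptyset$ for $j \neq i$) in that order, since the third is a formal consequence of the first two. For the pairwise disjointness I would argue by contradiction: if $w \in S_i \cap S_j$ with $i \neq j$, then after relabeling we may assume $|z_i| \leq |z_j|$. Since $w \in S^{\gamma}(z_i) \cap S^{\gamma}(z_j)$ and $z_i \neq z_j$, the defining conditions of $\mathcal{V}_{\gamma}(z_i)$ are met, so $z_j \in \mathcal{V}_{\gamma}(z_i)$. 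But then, directly from the construction of $S_i$, we have $w \notin S^{\gamma}(z_j) \supseteq S_j$, contradicting $w \in S_j$.

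For $\Delta_1(z_i) \subseteq S_i$ I would split into the two subclaims (a) $\Delta_1(z_i) \subseteq S^{\gamma}(z_i)$, and (b) $\Delta_1(z_i) \cap S^{\gamma}(z_j) = \emptyset$ for every $z_j \in \mathcal{V}_{\gamma}(z_i)$. Subclaim (a) is a purely geometric size comparison: the hyperbolic disc $\Delta_1(z_i)$ sits inside a Euclidean neighborhood of $z_i$ of diameter comparable to $1 - |z_i|$, while $S^{\gamma}(z_i)$ extends from $z_i$ by an amount comparable to $(1-|z_i|)^{\gamma}$, which dominates $1-|z_i|$ once $|z_i|$ is close enough to $1$, i.e.\ for all but finitely many indices --- and these excluded indices are exactly where the ``all but finitely many'' in the statement comes from.

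The main obstacle is subclaim (b), where both the weak separation and the normalization of the sequence must be used. I would apply Lemma~\ref{SeparationLemma} with the parameter set so as to exploit the specific choice $\gamma = (1+\beta)/2$: its conclusion $z_i \notin S^{\gamma}(z_j)$, together with the comparison $(1-|z_j|)^{\beta} \leq (1-|z_i|)$ and the normalization $1-|z_j|\leq(1-|z_i|)/2$, should allow one to quantify the buffer between $z_i$ and $S^{\gamma}(z_j)$ and show that the hyperbolic distance from $z_i$ to $S^{\gamma}(z_j)$ is strictly greater than $1$. Since $\Delta_1(z_i)$ is the hyperbolic ball of radius $1$ around $z_i$, this disjointness then follows. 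Keeping track of the several interlocking constants $\beta,\gamma,\eta$ is what makes this step delicate.

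With subclaims (a) and (b) established, the final assertion reduces to an immediate observation: for $j \neq i$, applying the already-proved inclusion gives $\Delta_1(z_j) \subseteq S_j$, and the pairwise disjointness from the first paragraph yields $\Delta_1(z_j) \cap S_i \subseteq S_j \cap S_i = \emptyset$, which is the content of the statement (reading the typographical ``$= \emptyset$'' as asserting $\Delta_1(z_j) \cap S_i = \emptyset$).
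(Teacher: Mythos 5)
Your proposal is correct and follows essentially the same route as the paper's proof: the same dichotomy ($S^{\gamma}(z_i)\cap S^{\gamma}(z_j)=\emptyset$ versus $z_j\in\mathcal{V}_{\gamma}(z_i)$) for pairwise disjointness, Lemma~\ref{SeparationLemma} together with the normalization for the inclusion $\Delta_1(z_i)\subseteq S_i$ (valid for all but finitely many indices), and the disjointness of the $S_i$ for the final claim. If anything, you give more detail on the middle step than the paper, which dispatches it with a one-line appeal to Lemma~\ref{SeparationLemma}.
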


\begin{proof}
Let $ i\neq j $. Without loss of generality $ |z_j|\geq|z_i| $. Hence, either $ S^\gamma(z_i)\cap S^\gamma(z_j)=\emptyset $ or $ z_j\in \mathcal{V}_\gamma(z_i) $. In both cases $ S_i \cap S_j = \emptyset $.

From Lemma \ref{SeparationLemma} it follows that $ \Delta_1(z_i) \subseteq S_i $, if $ z_i $ is sufficiently close to the boundary. Since $ S_i $ are pairwise disjoint,  $ \Delta_1(z_j)\subseteq \DD\setminus S_i $, for any $ j\neq i $.
\end{proof}

\begin{lem}\label{CofiniteW}
A sequence $ \{ z_i \} \subseteq \DD$ is onto interpolating for $ H_1(\DD) $ if a cofinite subsequence of it is.
\end{lem}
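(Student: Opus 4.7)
The plan is to reduce the finite data coming from the missing points to a smooth perturbation that can be glued on top of an interpolant obtained from the cofinite subsequence, without disturbing the values on the remaining hyperbolic disks. Since only finitely many points are added back, all the needed cutoffs are smooth and compactly supported in $\mathbb{D}$, so membership in $W^{1,2}(\mathbb{D})$ is automatic.

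More concretely, suppose that the cofinite subsequence $\{z_i\}_{i>N}$ is onto interpolating with constant $\varepsilon'>0$. Because there are only finitely many extra points $z_1,\dots,z_N$, I can choose $\varepsilon_0\in(0,\varepsilon']$ so small that the hyperbolic disks $\Delta_{2\varepsilon_0}(z_i)$ for $i\le N$ are pairwise disjoint, are compactly contained in $\mathbb{D}$, and are disjoint from every $\Delta_{\varepsilon'}(z_j)$ with $j>N$. Then fix cutoffs $\chi_i\in C^\infty_c(\mathbb{D})$ with $0\le\chi_i\le 1$, $\chi_i\equiv 1$ on $\Delta_{\varepsilon_0}(z_i)$ and $\supp\chi_i\subseteq\Delta_{2\varepsilon_0}(z_i)$, and set $\chi:=\sum_{i=1}^{N}\chi_i$. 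Note $\chi\in C^\infty_c(\mathbb{D})$ and the supports of the $\chi_i$ are pairwise disjoint.

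Now, given data $\{a_i\}\in\ell^2(\mathbb{N})$, apply onto interpolation to the tail $\{a_i\}_{i>N}\in\ell^2$ to obtain $v\in W^{1,2}(\mathbb{D})$ with $v|_{\Delta_{\varepsilon'}(z_j)}\equiv\sqrt{d(z_j)}\,a_j$ for every $j>N$. Define
\begin{equation*}
u := (1-\chi)v + \sum_{i=1}^{N}\sqrt{d(z_i)}\,a_i\,\chi_i.
\end{equation*}
Since $v\in W^{1,2}(\mathbb{D})$ and $\chi,\chi_i$ are smooth with compact support in $\mathbb{D}$, the function $u$ lies in $W^{1,2}(\mathbb{D})$. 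On $\Delta_{\varepsilon_0}(z_i)$ with $i\le N$ the disjointness of supports forces $\chi\equiv\chi_i\equiv 1$, so $u\equiv\sqrt{d(z_i)}\,a_i$; on $\Delta_{\varepsilon'}(z_j)$ with $j>N$ the support condition forces $\chi\equiv 0$, so $u\equiv v\equiv\sqrt{d(z_j)}\,a_j$. Taking $\varepsilon:=\min(\varepsilon_0,\varepsilon')$, this shows that $\{z_i\}$ is onto interpolating for $W^{1,2}(\mathbb{D})$.

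There is essentially no hard step: the only things to check are that the required $\varepsilon_0$ exists (immediate from finiteness of $N$ and $z_i\in\mathbb{D}$) and that the new constant $\varepsilon$ is independent of the data, which it is by construction. The only mildly delicate point is making sure the cutoff construction respects all the tail disks simultaneously, but this is guaranteed by the separation of $\{z_1,\dots,z_N\}$ from the tail that we forced when choosing $\varepsilon_0$.
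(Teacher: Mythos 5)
Your construction is the same as the paper's: glue the finitely many prescribed constants onto an interpolant for the tail via smooth, compactly supported cutoffs. The paper reinserts one point at a time and iterates; you reinsert all $N$ at once with $\chi=\sum_{i\le N}\chi_i$, which is an inessential difference, and your verification of the values of $u$ on the various disks and of $u\in W^{1,2}(\mathbb{D})$ is fine.

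There is, however, one step you declare ``immediate'' that is not: the existence of $\varepsilon_0$ such that $\Delta_{2\varepsilon_0}(z_i)$, $i\le N$, is disjoint from \emph{every} $\Delta_{\varepsilon'}(z_j)$, $j>N$. Shrinking $\varepsilon_0$ cannot achieve this if some reinserted point $z_i$ happens to lie inside (or within hyperbolic distance $\varepsilon'$ of the center of) one of the tail disks: if $d(z_i,z_j)\le\varepsilon'$ for some $j>N$, then $z_i\in\overline{\Delta_{\varepsilon'}}(z_j)$ and $\Delta_{2\varepsilon_0}(z_i)\cap\Delta_{\varepsilon'}(z_j)\neq\emptyset$ for every $\varepsilon_0>0$. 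Nothing in the hypotheses rules this out. The repair is short but must be said: onto interpolation for the tail with parameter $\varepsilon'$ implies it for any smaller parameter (the constraint $u|_{\Delta_\varepsilon(z_j)}\equiv c$ only weakens as $\varepsilon$ decreases), and since the tail disks are pairwise disjoint, only finitely many $z_j$ lie within bounded hyperbolic distance of each $z_i$, $i\le N$; assuming the points of the sequence are distinct, one may therefore first shrink $\varepsilon'$ so that no $z_i$, $i\le N$, meets any $\overline{\Delta_{\varepsilon'}}(z_j)$, and only then choose $\varepsilon_0$. This is exactly what the paper builds in when it fixes a single $\varepsilon$ making \emph{all} the closed disks, including the one around the reinserted point, pairwise disjoint before constructing the cutoff.
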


\begin{proof}
It suffice to show that if $ \{z_i \}_{i=1}^\infty $ is an onto interpolating sequence then $ \{z_i\}_{i=0}^\infty $ is.  Fix $ \varepsilon>0 $ such that $\overline{ \Delta_\varepsilon}(z_i) $ are pairwise disjoint, and such that for all $ \alpha=\{a_i\}\in \ell^2 $ there exists $ u\in H_1(\DD) $ such that $ u|_{\Delta_\varepsilon(z_i)} \equiv \sqrt{d(z_i)}a_i, i\geq 1 $. Let also $ \varepsilon ' >0 $ such that $ \overline{ \Delta_\varepsilon}(z_0) \subseteq \Delta_{\varepsilon'}(z_0)  $ and $ \overline{ \Delta_{\varepsilon'}} \cap \bigcup_{i=1}^\infty \overline{ \Delta_\varepsilon}(z_i) = \emptyset $. Then there exists $ \xi \in C^\infty(\DD) $, such that $ \xi \equiv 0 $ on $\Delta_\varepsilon(z_0) $ and $ \xi \equiv 1 $ on $ \DD \setminus \Delta_{\varepsilon'}(z_0) $. Let $ \alpha=\{a_i\}_{i=0}^\infty \in \ell^2 $ and $ u $ as before. The function 
\begin{equation*}
	v:=\xi u + a_0(1-\xi),
\end{equation*} is the interpolating function for the sequence $ \{z_i\}_{i=0}^\infty $ and the data $ \{a_i\}_{i=0}^\infty $.
 \end{proof}

\begin{thm}\label{h2Int} A sequence $ \{z_i\} \subseteq \DD $ is onto interpolating for $ H_1(\DD) $ iff it is weakly separated and 
satisfies the capacitary condition.

\end{thm}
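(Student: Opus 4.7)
The plan is to prove the equivalence by two independent arguments, with the sufficiency direction the substantially harder one. The engine on both sides is the geometric decomposition of the disc into the pairwise disjoint cells $S_i$ of Lemma \ref{Regions}, combined with the translations between condenser capacities provided by Proposition \ref{ComparisonCapacities}, Lemma \ref{LogarithmicCondensers}, and Lemma \ref{CondensersBlowUp}.

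For the necessity direction, the open mapping theorem applied to the interpolation operator produces, for each $i$, a function $u_i\in W^{1,2}(\mathbb{D})$ with $u_i\equiv\sqrt{d(z_i)}$ on $\Delta_\varepsilon(z_i)$, $u_i\equiv 0$ on $\Delta_\varepsilon(z_j)$ for $j\neq i$, and $\|u_i\|_{W^{1,2}}^2\leq K$. Applying Lemma \ref{StrechingCapacity} to $u_i/\sqrt{d(z_i)}$ gives $\operatorname{Cap}_\mathbb{D}\bigl(\Delta_\varepsilon(z_i),\bigcup_{j\neq i}\Delta_\varepsilon(z_j)\bigr)\leq K/d(z_i)$. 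Specialising the union to a single $z_j$ and computing the two--point condenser capacity in pseudo-hyperbolic coordinates then yields $d_{\mathrm{hyp}}(z_i,z_j)\gtrsim d_{\mathrm{hyp}}(z_i,0)$, i.e.\ weak separation. Restricting instead to $z_j\in\mathcal{V}_\gamma(z_i)$ and chaining Proposition \ref{ComparisonCapacities} with Lemma \ref{LogarithmicCondensers} converts the same estimate into $C\bigl(\bigcup_{z_j\in\mathcal{V}_\gamma(z_i)}I_{\varphi_{z_i}(z_j)}\bigr)\lesssim 1/d(z_i)$, which is (CC).

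For the sufficiency direction, after invoking Lemma \ref{CofiniteW} one may assume the sequence is $(\eta,\beta)$--normalized. The plan is to construct, for each $i$, a function $\phi_i\in W^{1,2}(\mathbb{D})$ supported in $S_i$ with $\phi_i\equiv 1$ on $\Delta_\varepsilon(z_i)$ and $\|\phi_i\|_{W^{1,2}}^2\lesssim 1/d(z_i)$. Because the cells $S_i$ are pairwise disjoint, the candidate interpolant $u:=\sum_i\sqrt{d(z_i)}\,a_i\,\phi_i$ automatically satisfies $u|_{\Delta_\varepsilon(z_i)}\equiv\sqrt{d(z_i)}\,a_i$, and the disjointness of the supports forces $\|u\|_{W^{1,2}}^2=\sum_i d(z_i)|a_i|^2\|\phi_i\|_{W^{1,2}}^2\lesssim\|\alpha\|_{\ell^2}^2$. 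The function $\phi_i$ itself is obtained by taking a minimizer for the condenser $(\mathbb{D},\Delta_\varepsilon(z_i),(\partial S^\gamma(z_i)\cap\mathbb{D})\cup\bigcup_{z_j\in\mathcal{V}_\gamma(z_i)}S^\gamma(z_j))$, truncating to $[0,1]$, restricting to $S_i$, and extending by zero. The $L^2$ part of the norm is controlled by $\mathrm{area}(S^\gamma(z_i))\sim (1-|z_i|)^{2\gamma}$, which is exponentially small compared to $1/d(z_i)$ once $|z_i|$ is sufficiently large.

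The main obstacle is the Dirichlet bound $\int|\nabla\phi_i|^2\lesssim 1/d(z_i)$. Subadditivity of capacity in the plate splits this into two contributions. The first, $\operatorname{Cap}_\mathbb{D}(\Delta_\varepsilon(z_i),\partial S^\gamma(z_i)\cap\mathbb{D})$, is of the order of the capacity between two small hyperbolic discs at mutual hyperbolic distance $\sim 2(1-\gamma)d(z_i)$ (after reflecting across the geodesic $\partial S^\gamma(z_i)$), and so contributes $O(1/d(z_i))$. The second, $\operatorname{Cap}_\mathbb{D}(\Delta_\varepsilon(z_i),\bigcup_{z_j\in\mathcal{V}_\gamma(z_i)}S^\gamma(z_j))$, requires more care: Proposition \ref{ComparisonCapacities} rewrites it up to constants as a capacity against the expanded arcs $\bigcup I^\gamma_{z_j}$; Lemma \ref{CondensersBlowUp} (applicable thanks to the choice $\gamma=(1+\beta)/2>\beta$) passes from $\bigcup I^\gamma_{z_j}$ to $\bigcup I_{z_j}$; and Lemma \ref{LogarithmicCondensers} together with the capacitary hypothesis (CC) then closes the $O(1/d(z_i))$ estimate.
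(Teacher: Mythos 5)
Your proposal is correct, and its skeleton is the same as the paper's: the pairwise disjoint cells $S_i$ of Lemma~\ref{Regions} carry building blocks with disjoint supports, and the energy of each block is controlled through the chain Proposition~\ref{ComparisonCapacities} $\to$ Lemma~\ref{CondensersBlowUp} $\to$ Lemma~\ref{LogarithmicCondensers} $\to$ (CC). The genuine difference is how the block $\phi_i$ is built in the sufficiency direction. The paper takes a product $w_i=u_i v_i$, where $u_i$ is a truncation of B\"oe's holomorphic function $f_{z_i,\gamma}$ (equal to $1$ on $\Delta_1(z_i)$, vanishing off $S^{\gamma}(z_i)$, energy $\lesssim 1/d(z_i)$) and $v_i$ is the capacitary function killing $\bigcup_{z_j\in\mathcal{V}_\gamma(z_i)}S^{\gamma}(z_j)$, obtained exactly as in your second piece; you instead take a single truncated equilibrium potential for the condenser whose plate is $(\partial S^{\gamma}(z_i)\cap\mathbb{D})\cup\bigcup_{z_j\in\mathcal{V}_\gamma(z_i)}S^{\gamma}(z_j)$, cut it off to $S_i$, and use subadditivity of capacity in the plate. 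This is a legitimate and arguably more natural route in the real-variable setting: it avoids importing B\"oe's lemma altogether, at the price of the extra (elementary) estimate $\capacity_{\mathbb{D}}\big(\Delta_\varepsilon(z_i),\partial S^{\gamma}(z_i)\cap\mathbb{D}\big)\lesssim_{\gamma} 1/d(z_i)$, which your reflection argument (or the two-disc formula $\capacity_{\mathbb{D}}(\Delta_1(z),\Delta_1(w))\sim 1/d(z,w)$ already used in the paper) supplies; the subadditivity step is no harder than the paper's product-rule estimate for $\nabla(u_iv_i)$. The necessity direction is identical to the paper's. The one point you gloss over there is that invoking the open mapping theorem first requires knowing that the restriction operator $T:u\mapsto\{u(z_i)/\sqrt{d(z_i)}\}$ is a bounded map from the closed subspace of $W^{1,2}(\mathbb{D})$ functions constant on the discs into $\ell^\infty$; the paper checks this via $\sup\{|u(z_i)|^2:\Vert u\Vert\leq 1\}=1/\capacity_{\mathbb{D}}\big(\Delta_1(0),\Delta_1(z_i)\big)\lesssim d(z_i)$, and you should include that verification.
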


\begin{proof}We will start with the more involved direction which is the sufficiency of the conditions in the statement. Notice that if the capacitary condition is satisfies for some $ \alpha<1 $, then it is satisfied for all $ \gamma, \alpha<\gamma<1 $. Therefore we can assume that it is satisfied for $ \gamma $ as large as the one in Lemma \ref{SeparationLemma}. Assume without loss of generality that $ \Delta_1(z_i) \cap \Delta_1 (z_j) = \emptyset, i\neq j $. The estimates that we state next might fail for a finite number of points in our sequence but in that case Lemma \ref{CofiniteW} allows us to initially disregard any finite number of points.

Then suppose that $ f_i:=f_{z_i,\gamma} $ is B\o e's function for $ z_i $ and $ S^{\gamma}(z_i) $. There exists a constant $ C_0>0 $ such that $ |f_i(z)| \leq C_0 e^{-6d(z_i)},$ for all $ z\not\in S^{\gamma}(z_i) $ and $ |1-f_i(z)| \leq C_0/d(z_i), z\in \Delta_1(z_i) $. Set 
\begin{equation*}
u_i:= \min\{\max\{\frac{|f_i|-C_0e^{-6d(z_i)}}{1-C_0/d(z_i)-C_0e^{-6d(z_i)}}  ,0\} 1 \}.
\end{equation*}

The function just constructed satisfies $ u_i|_{\Delta_1(z_i)} \equiv 1$,
$ \,\,\, u_i|_{\DD\setminus S^{\gamma}(z_i)} \equiv 0  $, $ \norm u \norm_{H_1(\DD)}^2 \leq C/d(z_i) $ and $ \norm u_i\norm _\infty \leq C $.

Next we apply Lemma \ref{ComparisonCapacities}
 to the stronger version of the capacitary condition in Lemma \ref{VorSdoesnotmatter} and we arrive at 
 \[ \capacity_\DD \Big( \Delta_1(z_i), \bigcup_{z_j\in\mathcal{V}_\gamma(z_i)} S^\gamma(z_j)  \Big)\leq \frac{K}{d(z_i)}\]
 Hence by definition of condenser capacity  there exists $ v_i\in H_1(\DD) $ such that $ v_i|_{\Delta_1(z_i)} \equiv 1 $, $ v_i|_{S^{\gamma}(z_j)} \equiv 0 $ for all $ z_j\in  \mathcal{V}_\gamma(z_i) $ and $ \norm v_i \norm_{H_1(\DD)}^2 \leq C/d(z_i) $. Without loss of generality we can also assume that $\norm v_i \norm_{\infty} \leq 1$

Our interpolation building blocks will be the functions $ w_i:=u_i\cdot v_i $. Notice that by construction $ \supp w_i \subseteq S_i $, hence, $ w_i|_{\Delta_1(z_j)} \equiv \delta_{ij} $ by Lemma \ref{Regions}. Furthermore $\norm w_i \norm^2_{H_1(\DD)} \leq C/d(z_i)$ and $\norm w_i \norm_\infty \leq C.$

This observation clearly suggests that, if $ \alpha = \{a_i\} \in \ell^2(\NN) $, the obvious candidate for interpolation is the function $ F:= \sum_{i=1}^{\infty}a_i \sqrt{d(z_i)} w_i $. It takes the right values on hyperbolic discs $ \Delta_1(z_i). $ It remains to show that is is actually in $ H_1(\DD) $.

Let $ N\in \NN $. $ F_N := \sum_{i=1}^{N}a_i \sqrt{d(z_i)} w_i $.
Then,
\begin{align*}
 \int_{\DD}&|\nabla F_N(z)|^2+ |F_N|^2(z)dA(z)\\
& \leq \sum_{i=1}^{N}|a_i|^2 d(z_i) \int_{S_i}|\nabla w_i(z) |^2 dA(z) + 
\sum_{i=1}^{N}|a_i|^2 d(z_i) \int_{S_i} |w_i(z)|^2 dA(z) \\
&\leq C  \sum_{i=1}^{N} |a_i|^2 d(z_i) \norm w_i \norm _{H_1(\DD)}^2 + \sum_{i=1}^{N}|a_i|^2 d(z_i) \norm w_i \norm_\infty^2 |S_i|  \\
& \leq C \Big( \sum_{i=1}^{N} |a_i|^2+ \sum_{i=1}^{N}|a_i|^2 d(z_i) (1-|z_i|)^{\gamma} \Big) \\
& \leq C \sum_{i=1}^{\infty}|a_i|^2.
\end{align*}

Now we turn to the necessity of the conditions. Without loss of generality $ 0 \in \{z_i\} $ and $ \varepsilon = 1 $.
We should notice that also here there exists a weighted restriction operator, defined on the subspace of $H_1(\DD)$ of functions constant on hyperbolic discs $\Delta_1(z_i)$. Hence if a sequence is onto interpolating as in the Dirichlet space we can solve the interpolation problem with ``norm control", meaning that we can find $u\in H_1(\DD)$ such that 
\begin{align*}
     u|_{\Delta_1(z_i)}  & \equiv \sqrt{d(z_i)} a_i \\
     \norm u \norm_{H_1(\DD)}^2 &  \leq C  \norm a \norm_{\ell^2}^2.
\end{align*}
Considering such a $u_j$ which interpolates $\{\delta_{ij} \}_i$, then $u$ is also an admissible function for the condenser $\big( \Delta_1(z_i), \Delta_1(z_j) \big)$, for any $i\neq j.$ It is immediate that 
\[ \capacity_\DD\big( \Delta_1(z_j), \Delta_1(z_i) \big) \leq \norm u \norm_{H_1(\DD)}^2 \leq \frac{C}{d(z_j)} \]

Since also by conformal invariance of condenser capacity we have 
\begin{equation*}
\capacity_{\DD}\big( \Delta_1(z_i), \Delta_1(z_j) \big) \approx \frac{1}{\log\frac{1}{|I_{\phi_{z_i}(z_j)}|}} \approx \frac{1}{d_h(z_i,z_j)}
\end{equation*}the sequence is weakly separated. The capacitary condition then follows by the definition of condenser capacity and Lemma \ref{ComparisonCapacities}.

\end{proof}

\section{A quantitative version of Bishop's theorem}\label{BishopTheorem}

We are now in a position to prove Theorem \ref{Theorem B}. The only substantial difference with the proof of the non holomorphic case is that the function $v_i$ which in the proof of Theorem \ref{Theorem C} (that exist by our assumptions on the condenser capacity) they can no longer be used since they are not holomorphic. Their role will be played by the functions constructed in Lemma \ref{BishopFunction}.

\begin{proof}[Proof of Theorem \ref{Theorem B}] We will denote by $ C $ a general constant depending only on $ K_0 $. Suppose that $ \{z_i \} $ is $ K- $ strongly separated. Let us exclude initially a finite number of points from the sequence such that Lemmas \ref{SeparationLemma} and Propositions \ref{CondensersBlowUp}, \ref{NecessaryCondition} apply. We can always add these points in the sequence in the end.

In our construction we will need three types of building blocks. The functions constructed by Bishop, B\o e and the sequences of functions guaranteed by the strong separation hypothesis. First we perform a simple trick so that the sequence of functions coming from strong separation are uniformly bounded in modulus. We know that  there exist multipliers $ m_i \in \mathcal{M}(\mathcal{D}) $ such that $ \norm m_i \norm_{\mathcal{M}(\mathcal{D})} \leq K $ and $ m_i(z_j)= \delta_{ij} $. Consider the functions $ f_i:=\frac{m_i k_{z_i}}{d(z_i)} $. It is immediate that  $ \normD{f_i}^2 \leq K/d(z_i), f_i(z_j)=\delta_{ij} $ and $ \norm  f_i \norm _\infty \leq C $.

The sequence is also weakly separated by a constant $\varepsilon = \varepsilon(K)>0$.
Let $\gamma = 1-\varepsilon/4 $ so that Lemma \ref{SeparationLemma} applies. We know that $\cZ$ satisfies the condition in Proposition \ref{NecessaryCondition} for $\gamma$ as defined here, hence  by applying Proposition \ref{CondensersBlowUp}  (for $\beta = \gamma $ and $\alpha = 1 + \varepsilon / 4$ ) we get
\begin{equation*}
\capacity_\DD\Big( \Delta_1(z_i), \bigcup_{z_j\in \mathcal{V}_{\gamma}(z_i)}I_{z_j}^{\gamma} \Big) \leq \frac{C}{d(z_i)}.
\end{equation*}
Let now $ g_i $ be the functions that we get if we apply Lemma \ref{BishopFunction} to the condenser $ \Big( \Delta_1(z_i), \bigcup_{z_j\in \mathcal{V}_{\gamma}(z_i)}I_{z_j}^{\gamma} \Big) $. Finally, let  $ h_i $ be B\o e's function associated to $ z_i $ and $ S^{\gamma}(z_i). $ Multiply these functions together to get $ u_i:=f_ig_ih_i $.

Suppose we are given a sequence $ \alpha = \{a_i\} \in \ell^2(\NN) $. As in the non holomorphic case the obvious choice for the interpolating function would be $ F:= \sum_{i=1}^{\infty}a_i \sqrt{d(z_i)} u_i  $. At least formally $ F $ assumes the correct values, the problem now being that $ u_i $ do not have disjoint supports. Nevertheless $ u_i $ are small outside the regions $ S_i $, as defined in Lemma \ref{Regions}, in the following sense.
Assume $ z\not\in S_i $, then $ |u_i(z)| \leq C e^{-6d(z_i)} $, and if $ S(E_i):= \bigcup_{z_j\in\mathcal{V}_{\gamma}(z_i)}S^{\gamma}{(z_j)}  $ 
\begin{align*}
 \int_{S(E_i)} |u_i'|^2dA & \leq 3 \int_{S(E_i)} |f_i'g_ih_i|^2dA \\  & \,\,\,\,\,\,\,\,\,\,\,\,\,\,\,\,\,  +3\int_{S(E_i)}  |f_ig_i'h_i|^2dA+3\int_{S(E_i)} |f_ig_ih_i'|^2dA \\
& \leq Ce^{-6d(z_i)}\int_{S(E_i)} |f_i'|^2dA \\
 & \,\,\,\,\,\,\,\,\,\,\,\,\,\,\,\,\, +C\int_{S(E_i)} |g_i'|^2dA +  Ce^{-6d(z_i)}\int_{S(E_i)} |h_i'|^2dA \\
& \leq C e^{-6d(z_i)}.
\end{align*} 
In the same way \[ \int_{\DD\setminus S^{\gamma}(z_i)}|u_i'|^2dA \leq C e^{-6d(z_i)}.\]

 Set $ F_N=\sum_{i=1}^{N}a_i \sqrt{d(z_i)} u_i $. And estimate as follows
\begin{align*}
\int_{\DD} |F_N'|^2dA & \leq 2\sum_{i=1}^{N}|a_i|^2 d(z_i) \int_{S_i} |u_i'|^2dA + 2\Big( \sum_{i=1}^{N}|a_i| \sqrt{d(z_i)} \Big[ \int_{\DD\setminus S_i}|u_i'|^2dA \Big]^{\frac{1}{2}} \Big)^2 \\
& \leq 2\sum_{i=1}^{N}|a_i|^2 d(z_i) \int_{\DD} |u_i'|^2dA + c\Big( \sum_{i=1}^{N}|a_i| \sqrt{d(z_i)} e^{-6d(z_i)} \Big)^2 \\
& \leq C\sum_{i=1}^{N}|a_i|^2  + C \sum_{i=1}^{N}|a_i|^2 \sum_{i=1}^{N} d(z_i)e^{-12d(z_i)} \\
& \leq C \sum_{i=1}^{\infty}|a_i|^2.
\end{align*}
The constant $ C $ above is independent of $ N $ because every weakly separated sequence satisfies  $ \sum_{i=1}^{\infty}d(z_i)e^{-12d(z_i)}<+\infty $ (see for example \cite{Arcozzi16}). Hence, $ \normD{F_N} \leq C \sum_{i=1}^{\infty}|a_i|^2 $. By choosing a $ weak-* $ cluster point of the sequence we have a function $ f $ which solves the interpolation problem. Since $ C $ depends only on $ K_0 $ the interpolation constant can be chosen uniformly, as in the statement.
\end{proof}

\section{Onto interpolation in \texorpdfstring{$ \mathcal{D} $}{D} for finite measure sequences}\label{DirichletInterpolation}

The necessity of the capacitary condition comes from Proposition \ref{CapacitarySufficient} together with Lemma \ref{LogarithmicCondensers}. The other direction follows from the next proposition.

\begin{prop}
If $ \{z_i\} $ has finite associated measure, then,
\begin{equation*}
	(WS)+(CC) \implies (OI).
\end{equation*}
\end{prop}

\begin{proof} Since this proof requires an inductive argument on finite subsets of the sequence we will be more careful with our constants.

Let $ \{z_i\} $ be a sequence as in the statement.  We can choose a constant $ C_0>1 $, which depends only on the sequence, that is large enough such that for all $ z_i\in \{z_i\} $ there exists $ f_i\in\mathcal{D} $ with $ \normD{f_i}^2\leq C_0 / d(z_i) $, $ f_i(z_i)=1 $ and $ |f_i(z_j)|^2 \leq C_0 e^{-d(z_i)} $ if $ j \neq i $. The functions $ f_i $ are constructed by multiplying B\o e's function $ f_{z_i,\gamma} $ with the function $ g_{z_i} $ from Lemma \ref{BishopFunction}. Also, by the quantitative version of Bishop's Theorem, there exists a constant $ C_1>1 $ such that for every  sequence of points $ \mathcal{E} \subseteq \DD $ which is $ K- $  strongly separated, $ K\leq 4C_0+1 $, satisfies $ \text{OntoInt}(\mathcal{E}) \leq C_1  $. By deleting a finite number of points in the sequence we can ensure that
\begin{equation*}
e^{-d(z_i)} \leq \frac{(1-1/\sqrt{2})^2}{4C_1C_0^2}\frac{1}{d(z_i)}, \,\, \text{for all}\,\,\, i\in\NN,
\end{equation*} 
and
\begin{equation*}
\sum_{j=1}^{\infty}\frac{1}{d(z_j)} \leq 1.
\end{equation*} 

For some $ N\in\NN $ set 
\begin{equation}
M_N := \sup_{\substack{\mathcal{E}\subseteq \{z_i\}\\ |\mathcal{E}|=N}} \text{StrongSep}(\mathcal{E}), \,\,\,\,  A_N:= \sup_{\substack{\mathcal{E}\subseteq \{z_i\}\\ |\mathcal{E}|=N}} \text{OntoInt}(\mathcal{E}).
\end{equation}

Notice that $ M_1\leq 1 $, but \textit{a priori} we don't even know if $ M_N<\infty $ for $ N>1 $. We claim that  $ M_N \leq 4C_0+1  $, for all $ N \geq 1 $. Suppose the statement is true for some $ N\geq 1  $. Consider any $ \mathcal{E} \subseteq \{z_i\} $ such that $ |\mathcal{E}|=N+1 $ and fix some $ z_i\in\mathcal{E} $. Let $ f_i $ be the function as defined above. By the induction hypothesis we have $ A_N\leq C_1 $, hence, there exists $ g_i\in\mathcal{D} $ such that $ g_i(w)+f_i(w)=0,  $ for all $ w\in\mathcal{E}\setminus\{z_i\} $ with 
\begin{equation*}
\normD{g_i}^2 \leq C_1 \sum_{w\in\mathcal{E}\setminus \{z_i\}}\frac{|f_i(w)|^2}{d(w)} \leq C_0 C_1 e^{-d(z_i)} \sum_{j=1}^{\infty}\frac{1}{d(z_j)} \leq \frac{1}{4d(z_i)}.
\end{equation*}

Furthermore,  
\begin{align*}
|g_i(z_i)|^2 & \leq \normD{g_i}^2 \normD{k_{z_i}}^2 \\
 &\leq C_0C_1 e^{-d(z_i)}C_0d(z_i)\\ 
  &\leq C_1C_0^2 d(z_i)e^{-d(z_i)} \\
  &\leq (1-1/\sqrt{2})^2.
\end{align*}

Finally, consider the function $ h_i:=(f_i+g_i)/(f_i(z_i)+g_i(z_i)). $ By definition $ h_i(z_i)=1 $ and $ h_i(w)=0 $ for all $ w\in\mathcal{E}\setminus\{z_i\} $. Also,
\begin{equation*}
\normD{h_i}^2 = \frac{\normD{f_i+g_i}^2}{|f_i(z_i)+g_i(z_i)|^2} \leq 4 \normD{f_i}^2+ 4 \normD{g_i}^2 \leq \frac{4C_0}{d(z_i)}+\frac{1}{d(z_i)}.
\end{equation*}Since $ z_i $ was arbitrary by definition of $ M_{N+1} $, we have that $ M_{N+1}\leq 4C_0+1 $.
 The induction is complete and it gives that $ \limsup_{N\to\infty}M_N \leq 4C_0+1 < \infty $. Therefore $ \{ z_i \} $ is strongly separated.

\end{proof}


\section{Some remarks on the capacitary condition} \label{Unions}

\subsection{A stronger condition implying the capacitary condition}\label{CapacitarySufficient}

\begin{proof}[Proof of Theorem \ref{THEOREM D}] The idea of the proof is an old one, originally due to Shapiro and Shields, and it amounts to use Nevanlinna-Pick property in order to compensate for the lack of Blaschke products in the Dirichlet space. Let $ z_i $ a point in the sequence. Let $S^\gamma_\circ(z_i)$ the set of points $z_j$ in the sequence such that $z_i$ has $\gamma$-uninterrupted view of $z_j$.  For each $ z_j\in S^\gamma_\circ(z_i) $ consider the multiplier $ \psi_{ij}\in\mathcal{M}_1(\mathcal{D}):= \{m\in\mathcal{M}(\mathcal
	D): \norm m \norm_{\mathcal{M}(\mathcal{D})} \leq 1 \} $ which vanishes at $ z_j $ and maximizes $ \Re\psi_{ij}(z_i) $. Due to the Nevanlinna Pick property of $  \mathcal{D} $ (see \cite[Theorem 9.43]{AglerMcCarthy00}, 
	
	\[  \psi_{ij}(z_i) = 1- \frac{|\inner{k_{z_i}}{k_{z_j}}|^2}{\normD{k_{z_i}^2}\normD{k_{z_i}}^2}. \] 
	
	Then consider a $ weak-* $ cluster point of the sequence  $ \psi_{ij_1}^2\psi_{ij_2}^2 \dots \psi_{ij_N}^2 $, where $ S^\gamma_\circ(z_i) = \{z_{j_1},z_{j_2},\dots \}  $. Let's call this cluster point $ \psi_i $. Obviously it vanishes on all points in $ S^\gamma_\circ(z_i)  $ and at $ z_i $ takes the value 
	\begin{equation*}
		\psi_i(z_i)= \prod_{z_j\in S^\gamma_\circ(z_i)}1- \frac{|\inner{k_{z_i}}{k_{z_j}}|^2}{\normD{k_{z_i}^2}\normD{k_{z_i}}^2}.
	\end{equation*} 
	To estimate this infinite product we use our hypothesis
	\begin{align*}
	 \sum_{z_j\in S^\gamma_\circ(z_i) } \frac{|\inner{k_{z_i}}{k_{z_j}}|^2}{\normD{k_{z_i}^2}\normD{k_{z_i}}^2} &
			 \lesssim \sum_{z_j\in S^\gamma_\circ(z_i) } \frac{\Big( \log\frac{1}{|1-\overline{z_i}z_j|}\Big)^2}{d(z_i)d(z_j)}	\\
			& \lesssim \sum_{z_j\in S^\gamma_\circ(z_i) }\frac{d(z_i)}{d(z_j)} \\
			& \leq C.	
	\end{align*} Considering also that the sequence is weakly separated, hence every individual term of the infinite product is bounded away from zero in modulus we can conclude that  $ |\psi_i(z_i)| $ is bounded below by a constant independent of $ i $. Consider the functions $ f_i:=k_{z_i} \psi_i/(\normD{k_{z_i}}^2|\psi_i(z_i)|) $. The same argument as in the proof of Lemma \ref{NecessaryCondition} applied to the functions $ f_i $, shows that 
	\begin{equation*}
		\capacity_\DD\Big( \Delta_1(z_i), \bigcup_{z_j \in  S^\gamma_\circ(z_i)} I_{z_j} \Big) \leq \frac{C}{d(z_i)}.
	\end{equation*} But then the estimate for the capacitary condition is immediate by Proposition \ref{CondensersBlowUp} and Lemma \ref{SeparationLemma}.
	\begin{align*}
		\capacity_\DD\Big( \Delta_1(z_i), \bigcup_{z_j \in  S^\gamma(z_i)} I_{z_j} \Big)  
	&	\leq \capacity_\DD\Big( \Delta_1(z_i), \bigcup_{z_j \in  S^\gamma_\circ(z_i)} I_{z_j}^{\gamma} \Big) \\ &\lesssim \capacity_\DD\Big( \Delta_1(z_i), \bigcup_{z_j \in  S^\gamma_\circ(z_i)} I_{z_j} \Big) \\
		& \leq  C/d(z_i).
	\end{align*}
\end{proof}

\subsection{A negative result} \label{Counterexample} In order to construct the counter example in Theorem \ref{Thoerem F} we will exploit the standard Bergman tree in the unit disc and the relevant analysis of onto interpolating sequences in the Dirichlet space of the Bergman tree carried by Arcozzi Rochberg and Sawyer in \cite{Arcozzi16}. 

For $ n\in\NN $ and $ k=1,2,\dots 2^n $, let $ z(k,n)=(1-2^{-n})e^{2i\pi\frac{k}{2^n}} $ and denote by $ \tau $ the collection of all such points. For $\alpha = z(k,n)\in\tau $ we will refer to $ n $ as the level of $ \alpha $ and denote it by $ d_\tau(a) $. The set $ \tau $ can be given a structure of a rooted tree by declaring the origin to be the root of the tree and if $ \alpha,\beta\in\tau $, we say that a directed edge connects $ \alpha $ to $ \beta $ if $ d_\tau(\alpha)+1=\level{\beta} $ and $ S(\alpha) \supseteq S(\beta) $. In this case we say that $ \beta $ is a child of $ \alpha $. Each $ \alpha $ has two children $ \sigma_+\alpha, \sigma_-\alpha $ and a unique predecessor $ \alpha^{-1} $. The tree is partially order by the relation $ \alpha \leq \beta  $ iff $ S(\alpha)\subseteq S(\beta) $.

The tree model is convenient because it gives a necessary condition for interpolation in the Dirichlet space in terms of capacities defined on trees, which are highly more computable  with respect to their continuous counterparts. In fact, in \cite{Arcozzi16}, Arcozzi Rochberg and Sawyer gave the following necessary  condition for onto interpolation.

\begin{prop}[Tree Capacitary Condition] Suppose that $ \{ z_n \} \subseteq \tau $ is an onto interpolating sequence for the Dirichlet space. Then
 \begin{equation*}
\inf \Big\{ \sum_{\beta\in\tau \setminus \{0 \} }|\nabla f(\beta)|^2: f:\tau \to \mathbb{C}, f(\alpha)=1, f(\gamma)=0 \,\,\, , \forall \gamma\in \{ z_i \} \setminus \{\alpha \} \Big\} \leq \frac{C}{d(\alpha)}
\end{equation*} for all $ \alpha \in \{z_i\} $. Where $ \nabla f(\beta) := f(\beta)-f(\beta^{-1}) $.We will denote the quantity on the left by $ \capacity_\tau(\alpha; \{z_i\}\setminus \{\alpha \} ) $.
\end{prop}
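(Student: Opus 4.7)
The strategy is to combine the well-known implication $(OI)\Rightarrow(SS)$ with a restriction inequality that transfers estimates from the continuous Dirichlet norm on $\mathbb{D}$ to the discrete tree energy on $\tau$. The tree points $\{z_i\}$ sit inside $\mathbb{D}$, so any holomorphic peak function becomes, by restriction, an admissible test function on $\tau$.

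First I would use that onto interpolation forces strong separation, a fact proved in the introduction via the open mapping theorem applied to the restriction operator $T:\mathcal{D}\to\ell^{\infty}$. This produces, for each $\alpha\in\{z_i\}$, a function $f_\alpha\in\mathcal{D}$ with $f_\alpha(\alpha)=1$, $f_\alpha(z_j)=0$ for every $z_j\in\{z_i\}\setminus\{\alpha\}$, and $\normD{f_\alpha}^2\le K/d(\alpha)$, where $K$ depends only on the sequence. Since $\{z_i\}\subseteq\tau$, the restriction $f_\alpha|_\tau:\tau\to\mathbb{C}$ inherits the two interpolation conditions and is therefore admissible for the infimum appearing in the proposition.

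The core of the argument will then be the restriction estimate
\begin{equation*}
\sum_{\beta\in\tau\setminus\{0\}}|f(\beta)-f(\beta^{-1})|^2 \lesssim \normD{f}^2, \qquad f\in\mathcal{D}.
\end{equation*}
To prove it, I would fix an edge $(\beta,\beta^{-1})$ of the tree and observe that the two endpoints lie at uniformly bounded hyperbolic distance, so there is a Euclidean disc $B_\beta\subset\mathbb{D}$ of radius $\sim 1-|\beta|$ containing them well in its interior. Writing $f(\beta)-f(\beta^{-1})=\int_\gamma f'(z)\,dz$ along the Euclidean segment $\gamma\subset B_\beta$ connecting $\beta^{-1}$ and $\beta$, and applying the submean value inequality $\sup_{\gamma}|f'|^2\lesssim (1-|\beta|)^{-2}\int_{B_\beta}|f'|^2\,dA$ together with Cauchy--Schwarz, yields $|f(\beta)-f(\beta^{-1})|^2\lesssim \int_{B_\beta}|f'|^2\,dA$. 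Because the family $\{B_\beta\}_{\beta\in\tau}$ has bounded overlap in $\mathbb{D}$, summing over $\beta$ produces the claimed inequality.

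Putting the two pieces together, $\sum_{\beta\in\tau\setminus\{0\}}|f_\alpha(\beta)-f_\alpha(\beta^{-1})|^2\lesssim \normD{f_\alpha}^2\le K/d(\alpha)$, which is exactly the tree capacitary bound required. The principal obstacle is the restriction estimate: one must set up the Euclidean discs $B_\beta$ so that they both capture the finite--difference $|f(\beta)-f(\beta^{-1})|$ via a submean argument and retain bounded overlap in the disc, all with constants independent of $\beta$. Everything else is a straightforward unwinding of the strong separation property and of the definition of $\capacity_\tau$.
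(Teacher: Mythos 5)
Your proof is correct. Note first that the paper does not actually prove this proposition: it is quoted as a known necessary condition from \cite{Arcozzi16}, so there is no internal proof to compare against. Your route --- extract strong separation from (OI) via the open mapping theorem to get peak functions $f_\alpha$ with $\normD{f_\alpha}^2\le K/d(\alpha)$, then restrict to $\tau$ and dominate the discrete energy $\sum_{\beta}|\nabla f_\alpha(\beta)|^2$ by the continuous Dirichlet integral using a submean-value estimate on each edge plus bounded overlap --- is the standard discrete-to-continuous comparison and is essentially how the cited source obtains the result. One small imprecision in the key estimate: since $\beta$ lies at Euclidean distance exactly $1-|\beta|$ from $\partial\mathbb{D}$ while $|\beta-\beta^{-1}|$ can be as large as $C(1-|\beta|)$ with $C>2$ (the angular gap contributes roughly $2\pi(1-|\beta|)$), a single Euclidean disc of radius $\sim 1-|\beta|$ containing both endpoints well in its interior cannot be contained in $\mathbb{D}$. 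Take instead $B_\beta$ to be a thin tube around the segment $\gamma$ (a union of discs of radius $c(1-|\beta|)$, $c$ small, centred at points of $\gamma$), or a hyperbolic disc $\Delta_r(\beta^{-1})$ of fixed radius $r$ large enough to contain $\beta$; either family still has bounded overlap with absolute constants, and the rest of your argument goes through unchanged.
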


\begin{lem} $ d_\tau(z)+1\approx d(z) $, $ z\in\tau $.
\end{lem}
\begin{proof}
Let $ z=z(n,k) $, then $ |z|=1-2^{-n} $. Without loss of generality $ n\neq 0 $. Therefore $ d_h(0,z)=\log_2(2^{n+1}-1) \leq n+1 $. On the other hand $ \log_2(2^{n+1}-1) \geq \log_2(2^n+1) \geq n $.
\end{proof}

The tree capacitary condition is much easier to analyse, mainly because there exists a recursive formula for its computation \cite[p. 32]{Arcozzi16}.

Given $ \alpha, \beta \in \tau $, $ \alpha<\beta $ and $ U_{\pm} \subseteq S(\sigma_{\pm}\beta) \cap \tau $ we have
\begin{equation*}\label{Recursive}
\capacity_\tau(\alpha,U_+\cup U_-)= \frac{\capacity_\tau(\alpha, U_+)+\capacity_\tau(\alpha, U_-)}{1+d_\tau(\alpha,\beta)[\capacity_\tau(\alpha, U_+)+\capacity_\tau(\alpha, U_-)]},
\end{equation*}
where $d_\tau(\alpha,\beta)$ is the {\it graph distance } of the points $\alpha, \beta.$

\begin{proof}[Proof of Theorem \ref{Thoerem F}] Let $ z_0\in\tau $, and set $ N=\level{z} $. Assume for simplicity that $ \sqrt{N}  $ is an integer. Consider also the points $ \{w_i\}_{i=0}^{\sqrt{N}} $, where $ w_0=z $ and $ w_{i+1}=\sigma_+w_i $, and the points $ z_i=\sigma_-^{(N)}w_i, 0\leq i\leq \sqrt{N} $. Due to the shape of the representation of this configuration of points as a graph we shall write $ comb(z_0):=\{z_1,\dots,z_{\sqrt{N}} \} $.

Let us start with an estimate of $ \capacity_\tau(z_0,comb(z_0)) $. This can be done by applying the recursive formula \eqref{Recursive}. Let $ c_i=\capacity_\tau(w_i,\{z_{i+1},\dots,z_{\sqrt{N}}\}) , \,\, i<\sqrt{N} $ and $ c_{\sqrt{N}}=0 $. Then the recursive formula gives 
\begin{equation*}
c_{i-1}=\frac{\frac1N+c_i}{1+\frac1N+c_i}=\rho  \begin{pmatrix}
1 & \frac1N \\
1 & \frac1N + 1 \\
\end{pmatrix} (c_i).
\end{equation*} Where $ \rho $ is the map  $ \rho : M_2(\mathbb{C}) \mapsto \text{M\"ob}, \begin{pmatrix}
a & b \\
c & d \\
\end{pmatrix} \overset{\rho}{\mapsto} \Big(z \mapsto \frac{az+b}{cz+d} \Big) $. Since $ \rho $ is a homomorphism we conclude that \begin{equation*}
\capacity_\tau(z_0,comb(z_0))=c_0=\rho \Big( \begin{pmatrix}
1 & \frac1N \\
1 & \frac1N + 1 \\
\end{pmatrix}^{\sqrt{N}} \Big) (0).
\end{equation*} After diagonalizing the matrix we get 
\begin{equation*}
\begin{pmatrix}
1 & \frac1N \\
1 & \frac1N + 1 \\
\end{pmatrix}^{\sqrt{N}} = \begin{pmatrix}
\delta_2 & \Delta \\
1 & 1 \\
\end{pmatrix} \begin{pmatrix}
(1-\Delta)^{\sqrt{N}} & 0 \\
0 & (1-\delta_2)^{\sqrt{N}} 
\end{pmatrix} \begin{pmatrix}
\delta_2 & \Delta \\
1 & 1 \\
\end{pmatrix}^{-1},
\end{equation*} where \begin{equation*}
\Delta=\frac{-\frac1N+\sqrt{(\frac1N)^2+4\frac1N}}{2} \,\,\,\, , \delta_2=\frac{-\frac1N-\sqrt{(\frac1N)^2+4\frac1N}}{2}.
\end{equation*} A simple algebraic manipulation of the previous expression leads to 
\begin{equation*}
\frac{c_0}{1/\sqrt{N}}=\frac{c_0}{\Delta}\sqrt{N}\Delta=\frac{1-\big(\frac{1-\Delta}{1-\delta_2}\big)^{\sqrt{N}}}{1-\frac{\Delta}{\delta_2}\big(\frac{1-\Delta}{1-\delta_2}\big)^{\sqrt{N}} } \sqrt{N}\Delta \overset{N \to \infty}{\longrightarrow} \frac{e^2-1}{e^2+1} > 0.
\end{equation*}Because $ \Delta/\delta_2 \to -1 $, $ \big(\frac{1-\Delta}{1-\delta_2}\big)^{\sqrt{N}} \to e^{-2} $. Hence, for $ N $ sufficiently big $ c_0 \geq \frac{1}{10\sqrt{N}} $.

Also we can calculate the total mass that each $ comb(z_0) $ carries

\begin{equation*}
\sum_{i=1}^{\sqrt{N}}\frac{1}{d(z_i)} \lesssim \sum_{i=1}^{\sqrt{N}}\frac{1}{\level{z_i}} 
\lesssim \sum_{i=1}^{\sqrt{N}}\frac{1}{N} 
=\frac{1}{\sqrt{N}} \lesssim \frac{1}{\sqrt{d(z_0)}}.
\end{equation*}

A last remark is the following. There exists an $ \eta<1 $, which can be chosen independently of $ N $ such that if $ 1 \leq i\neq j \leq \sqrt{N} $ then $ S^\eta(z_i)\cap S^\eta(z_j) = \emptyset $.

Consider now a new sequence of points $ \{\omega_i\} $ such that  for any $ \alpha\in comb(\omega_i),  \beta\in comb(\omega_j) $  $ S^\eta(\alpha) \cap S^\eta(\beta)=\emptyset  $ for $ i\neq j $ and some $0 < \eta < 1 $, and also $ \sum_{i=1}^{\infty}1/\sqrt{d(\omega_i)} < \infty$. By a theorem of Axler \cite{Axler92} $ \{\omega_i\} $ has a universally interpolating subsequence. We can assume without loss of generality that $ \{\omega_i\} $ itself is universally interpolating. Set $ \{w_i\} = \bigcup_{i=1}^\infty comb(\omega_i) $. It is clear that the union of the two sequences it cannot be onto interpolating because it fails the tree capacitary condition 
\begin{equation*}
\capacity_\tau(z_j,\{z_i\}_{j\neq i}\cup\{w_i\}) \geq \capacity_\tau(z_j,comb(z_j)) \geq \frac{1}{10\sqrt{\level{z_j}}} .
\end{equation*}

Nevertheless $ \{w_i\} $ is onto interpolating by Theorem \ref{Theorem A} because there exists $ \eta<1 $ such that  $ S^\eta(w_i)\cap S^\eta(w_j) = \emptyset $, it has finite associated measure and it is weakly separated.
\end{proof}

\subsection*{Concluding remarks}

If we have a sequence $ \{z_i\} \subseteq \tau $, it would be interesting to know whether the tree capacitary condition implies the capacitary condition, for that would mean that the onto interpolating sequences for the tree coincide with the onto interpolating sequences for the Dirichlet space, at least for finite measure sequences, which are much easier to understand mainly due to the recursive relations for tree capacities. 

Another question which remains open is if the characterization of onto interpolating sequences carries over to the case of infinite associated measure, something that is suggested by the analogous result for $ H_1(\DD) $. In fact if one examines the proof of Theorem \ref{Theorem A} can see that that would be true if $ \ell^\infty(\NN) \subseteq \{ \{ f(z_i) \} : f\in\mathcal{D} \} $ for every onto interpolating sequence.
There are even some questions in $H_1(\DD)-$interpolation which remain open. For example in our definition we introduced a parameter $\varepsilon$ and it is not clear at all how the interpolation constant depends on the parameter $\varepsilon$. Moreover our definition of $H_1(\DD)$ interpolation, although fit for our purposes, is only one of the natural definition that one could come up with for example, instead one could ask only that the interpolating function $u\in H_1(\DD)$ is only on average equal to the data, i.e. 
\[ \frac{1}{|\Delta_\varepsilon(z_i)|}\int_{\Delta_\varepsilon(z_i)} u dA = \sqrt{d(z_i)} a_i. \]
Such questions have not been investigated, but it is the authors opinion that it would be very interesting to explore them further.

\subsection*{Acknowledgements}
I am indebted to my supervisor Nicola Arcozzi for introducing me to the problems considered in this article and for his help with the construction of the counterexample in Section \ref{Counterexample}. Also I would like to thank Christopher Bishop for his courtesy we have included a detailed exposition of his proof in Section \ref{BishopTheorem}. Furthermore, I would like to thank Pavel Mozolyako, Dimitris Betsakos and Filippo Sarti for interesting discussions on harmonic measure. I thank also the anonymous referees for their careful comments that have greatly improved the clarity of the exposition.

\bibliographystyle{plain}
\bibliography{Bibliography}

\end{document}